\documentclass[a4paper,12pt]{article}

\usepackage[margin=1.2in]{geometry}

\usepackage{amsmath}
\usepackage{algorithmic}
\usepackage{algorithm}
\usepackage{amsthm}
\usepackage{amsfonts}
\usepackage{comment}
\usepackage{graphicx}
\usepackage{hyperref}
\usepackage{color}
\usepackage{array}
\usepackage{subcaption}

\newtheorem{theorem} {Theorem}
\newtheorem{lemma} {Lemma}
\newtheorem{definition} {Definition}

\newtheorem{assumption} {Assumption}

\def\n{{\mathbf{n}}}
\def\c{{\mathbf{c}}}
\def\x{{\mathbf{x}}}

\def\a{{\mathbf{a}}}
\def\u{{\mathbf{u}}}
\def\v{{\mathbf{v}}}
\def\z{{\mathbf{z}}}

\def\y{{\mathbf{y}}}

\def\b{{\mathbf{b}}}
\def\X{{\mathbf{X}}}
\def\Y{{\mathbf{Y}}}
\def\A{{\mathbf{A}}}

\def\M{{\mathbf{M}}}

\def\G{{\mathbf{G}}}
\def\C{{\mathbf{C}}}

\def\Z{{\mathbf{Z}}}

\def\EV{{\mathbf{EV}}}

\newcommand{\mW}{\mathcal{W}}

\newcommand{\mA}{\mathcal{A}}

\newcommand{\mK}{\mathcal{K}}
\newcommand{\mS}{\mathcal{S}}

\newcommand{\mbS}{\mathbb{S}}

\newcommand{\trace}{\textrm{Tr}}
\newcommand{\rank}{\textrm{rank}}
\newcommand{\reals}{\mathbb{R}}

\title{Linear Convergence of Frank-Wolfe for Rank-One Matrix Recovery Without Strong Convexity}
\date{}
\author{Dan Garber \\ {\small Technion - Israel Institute of Technology} \\{\small dangar@technion.ac.il}}

\begin{document}

 \maketitle

\begin{abstract}
We consider convex optimization problems which are widely used as convex relaxations for low-rank matrix recovery problems. In particular, in several important problems, such as phase retrieval and robust PCA, the underlying assumption in many cases is that the optimal solution is rank-one. In this paper we consider a simple and natural sufficient condition on the objective so that the optimal solution to these relaxations is indeed unique and rank-one. Mainly, we show that under this condition, the standard Frank-Wolfe method with line-search (i.e., without any tuning of parameters whatsoever), which only requires a single rank-one SVD computation per iteration, finds an $\epsilon$-approximated solution in only $O(\log{1/\epsilon})$ iterations (as opposed to the previous best known bound of $O(1/\epsilon)$), despite the fact that the objective is not strongly convex.
We consider several variants of the basic method with improved complexities, as well as an extension motivated by robust PCA, and  finally, an extension to nonsmooth problems.
\end{abstract}

\section{Introduction}  
Optimization problems in which the goal is to recover a low-rank matrix given certain data/measurements are ubiquitous in machine learning, statistics and related fields. These include for instance the well known \textit{matrix completion} problem \cite{Candes09,Recht11,Jaggi10,ge2016matrix}, the \textit{robust PCA} problem \cite{Candes11,wright2009robust,netrapalli2014non,yi2016fast,mu2016scalable},  matrix formulations of \textit{phase retrieval} problems \cite{candes2015phase,tropp15,Yurtsever17}, and more. While the natural low-rank formulations of these problems are NP-Hard, due to the non-convexity of the rank constraint/objective, all of these problems admit well known and highly popular convex relaxations in which the low-rank constraint is relaxed to a trace-norm constraint which is convex. These convex relaxations are well motivated both empirically and from statistical theory point of view (see above references). On the downside, the scalability of these convex relaxations to high-dimensional instances is questionable, since, despite the implicit assumption that an optimal solution of low-rank should exist, due to the relaxed trace-norm constraint, standard convex optimization methods, such as projected gradient methods \cite{Nesterov13,FISTA} and even conditional gradient-based methods (aka Frank-Wolfe), which are often the ``weapon of choice'' for such problems \cite{Jaggi10,laue2012hybrid,Garber16a,allen2017linear,Yurtsever17,Freund17,Garber18a,Garber18b}, may require in worst-case to compute singular value decompositions (SVD) of high-rank matrices, and/or  to store in memory high-rank matrices, which greatly limits their applicability. Also, since the objective in our case is not strongly convex, exiting analyses of conditional gradient-based methods only give a slow $O(1/\epsilon)$ convergence rate \cite{Garber16a,allen2017linear}.

In this paper, we focus on low-rank matrix optimization problems in which the goal is to recover a rank-one matrix. These include for instance important phase-retrieval problems and several applications of robust PCA, just to name a few. We begin by considering a simple and natural condition that certifies that the convex relaxation indeed admits a unique and rank-one optimal solution. This condition simply requires that at an optimal point, the (minus) gradient matrix admits a non-zero spectral gap between the first and second leading components. Mainly, we show that under this condition, the standard Frank-Wolfe method with line-search converges to an $\epsilon$-approximated solution with number of iterations that scales only with $\log{1/\epsilon}$, as opposed to $1/\epsilon$ in standard Frank-Wolfe analyzes. In particular, we obtain this exponential improvement without requiring the objective to be strongly convex as required in several recent works (e.g., \cite{Garber16a,allen2017linear,Garber18a}). Moreover, our use of the Frank-Wolfe method with line-search does not require any tuning of parameters whatsoever.

Concretely, we consider the following canonical optimization problem:
\begin{eqnarray}\label{eq:optProb}
\min_{\X\in\mS_n}f(\X),
\end{eqnarray} 
where $\mS_n = \{\X\in\mbS^n ~|~\X\succeq 0,~\trace(\X)=1\}$ is the spectrahedron in the space $\mbS^n$ of $n\times n$ real symmetric matrices and we use the standard notation $\X\succeq 0$ to indicate that $\X$ is a positive semidefinite matrix. The function $f:\mbS^n\rightarrow\reals$ is assumed to be convex, and unless stated otherwise it is also assumed to be $\beta$-smooth. 
We let $f^*$ denote the optimal value of Problem \eqref{eq:optProb}.

We refer to Problem \eqref{eq:optProb} as canonical, since it is well known that the highly popular low-rank matrix convex relaxations:
\begin{eqnarray}\label{eq:relatedProbs}
\min_{\Y\in\reals^{m\times n}:~\Vert{\X}\Vert_* \leq \tau}g(\Y) \qquad \textrm{and} \qquad \min_{\Y\in\mbS^n:~\Y\succeq 0,~\trace(\Y)\leq \tau}g(\Y),
\end{eqnarray}
could be directly formulated in the form of Problem \eqref{eq:optProb} (in the above we let $\Vert{\cdot}\Vert_*$ denote the trace-norm, i.e., sum of singular values), see for instance \cite{Jaggi10} \footnote{Here we note that while some problems, such as phase retrieval, are usually formulated as optimization over matrices with complex entries, our results are applicable in a straightforward manner to optimization over the corresponding spectrahedron $\{\X\in\mathbb{C}^{n\times n} ~|~\X\succeq 0,~\trace(\X)=1\}$. However, for simplicity of presentation we focus on matrices with real entries.}. 

We now describe a simple sufficient condition so that the canonical problem \eqref{eq:optProb} indeed admits a unique optimal solution which is also a rank-one matrix. This condition was already suggested in our recent work \cite{Garber19}, however there it was considered for the purpose of controlling the rank of SVD computations required by projected gradient methods to solve problems closely related to \eqref{eq:optProb}, and not for the purpose of obtaining fast convergence rates for globally-convergent methods, which is our main concern in this work.

\begin{assumption}\label{ass:gap}
There exists an optimal solution $\X^*$ to Problem \eqref{eq:optProb} such that $\delta := \lambda_{n-1}(\nabla{}f(\X^*))-\lambda_n(\nabla{}f(\X^*))  > 0$.
\end{assumption}

\begin{lemma}\label{lem:optSolStruct}[Lemma 7 in \cite{Garber19}]
Under Assumption \ref{ass:gap}, Problem \eqref{eq:optProb} admits a unique optimal solution $\X^*$ which is also a rank-one matrix, i.e., $\X^*=\x^*\x^{*\top}$, where $\x^*$ is the eigenvector corresponding to the eigenvalue $\lambda_n(\nabla{}f(\X^*))$.
\end{lemma}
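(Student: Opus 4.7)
The plan is to invoke first-order optimality on the spectrahedron and then exploit the spectral gap at $\nabla f(\X^*)$ to pin down the minimizer of the linearization.

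First, I would write the first-order optimality condition for convex minimization over the convex set $\mS_n$, namely
\begin{equation*}
\langle \nabla f(\X^*), \Y - \X^* \rangle \geq 0 \qquad \forall \Y \in \mS_n.
\end{equation*}
Equivalently, $\X^*$ is a minimizer of the linear objective $\Y \mapsto \langle \nabla f(\X^*), \Y \rangle$ over $\mS_n$. It is a standard fact that $\min_{\Y \in \mS_n}\langle G, \Y\rangle = \lambda_n(G)$, and that the set of minimizers is exactly $\{V D V^\top : D \succeq 0,\; \trace(D)=1\}$, where the columns of $V$ form an orthonormal basis of the eigenspace of $G$ associated with $\lambda_n(G)$.

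Next, I would apply this fact with $G = \nabla f(\X^*)$. Assumption 1 says that $\lambda_{n-1}(\nabla f(\X^*)) > \lambda_n(\nabla f(\X^*))$, so the smallest eigenvalue of $\nabla f(\X^*)$ is simple, and its eigenspace is spanned by a single unit vector $\x^*$ (unique up to sign). Consequently the linear problem above has a unique optimum $\x^* \x^{*\top}$, which immediately yields $\X^* = \x^* \x^{*\top}$, proving both rank-one structure and the description of $\x^*$.

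Finally, for uniqueness of the optimum of \eqref{eq:optProb} itself, I would take any other optimal $\Y \in \mS_n$ with $f(\Y) = f^*$. Convexity of $f$ gives
\begin{equation*}
f^* = f(\Y) \geq f(\X^*) + \langle \nabla f(\X^*), \Y - \X^*\rangle = f^* + \langle \nabla f(\X^*), \Y - \X^*\rangle,
\end{equation*}
so $\langle \nabla f(\X^*), \Y - \X^*\rangle \leq 0$; combined with the optimality inequality in the opposite direction, this forces equality, i.e., $\Y$ is itself a minimizer of the linearization, and the argument of the previous paragraph then forces $\Y = \x^*\x^{*\top}$. There is no genuinely hard step here; the only subtlety is keeping the direction of inequalities straight when deducing that every $f$-optimal point also minimizes the linearization at $\X^*$, so that the spectral-gap argument can be applied.
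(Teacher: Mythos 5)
Your proof is correct: the chain (first-order optimality $\Rightarrow$ $\X^*$ minimizes the linearization $\Y\mapsto\langle{\nabla{}f(\X^*),\Y}\rangle$ over $\mS_n$ $\Rightarrow$ under the eigen-gap the unique minimizer of this linear problem is $\x^*\x^{*\top}$, and any other $f$-optimal point must also minimize the linearization) is exactly the standard argument, and it is the same reasoning the paper imports from Lemma 7 of \cite{Garber19} rather than re-proving (the paper's appendix invokes precisely the fact that optimality forces the range of $\X^*$ into the bottom eigenspace of $\nabla{}f(\X^*)$). No gaps; the uniqueness step via convexity forcing $\langle{\nabla{}f(\X^*),\Y-\X^*}\rangle=0$ is handled correctly.
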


While Assumption \ref{ass:gap} is a sufficient condition for the the existence of a unique and rank-one optimal solution, it is not a necessary condition
. However, the following lemma suggests that this condition is necessary (and also sufficient) for the robustness of the rank of optimal points to arbitrarily-small perturbations.  In particular recall that by the first-order optimality condition it holds that $\Pi_{\mS_n}[\X^*-\beta^{-1}\nabla{}f(\X^*)] = \X^*$, where $\Pi_{\mS_n}[\cdot]$ denotes the Euclidean projection onto $\mS_n$. The lemma is a simple adaptation of Lemma 3 in \cite{Garber19}. A proof is given in the appendix for completeness.

\begin{lemma}\label{lem:gapRobust}
Let $f:\mbS^n\rightarrow\reals$ be $\beta$-smooth and convex. Let $\X^*\in\mS_n$ be an optimal solution of rank-one to the optimization problem $\min_{\X\in\mS_n}f(\X)$. Let $\lambda_1,\dots,\lambda_n$ denote the eigenvalues of $\nabla{}f(\X^*)$ in non-increasing order. Let $\zeta$ be a non-negative scalar. It holds that 
\begin{eqnarray*}
\rank(\Pi_{(1+\zeta)\mS_n}[\X^*-\beta^{-1}\nabla{}f(\X^*)]) > 1 \quad \Longleftrightarrow \quad \zeta >  \beta{}(\lambda_{n-1}-\lambda_n),
\end{eqnarray*}
where $(1+\zeta)\mS_n = \{(1+\zeta)\X~|~\X\in\mS_n\}$, and $\Pi_{(1+\zeta)\mS_n}[\cdot]$ denotes the Euclidean projection onto the convex set $(1+\zeta)\mS_n$.

\end{lemma}
Lemma \ref{lem:gapRobust} shows that an eigen-gap in $\nabla{}f(\X^*)$ implies certain rank-robustness of Problem \eqref{eq:optProb} to small perturbations in the trace bound. In particular, in case the gap in $\nabla{}f(\X^*)$ is zero, we see that an arbitrarily-small perturbation to the trace bound will map an original optimal solution to a higher-rank matrix, which suggests that in such a case, the convex relaxation is ill-posed for the purpose of rank-one matrix recovery.

The following lemma suggests that Assumption \ref{ass:gap} is also robust to certain perturbations in the objective function $f(\cdot)$, which can occur due to noise in the underlying data/misspecification. The proof is given in the appendix.

\begin{lemma}\label{lem:robustGapAss}
Let $f:\mbS^n\rightarrow\reals$ be $\beta$-smooth and convex. Suppose that Assumption \ref{ass:gap} holds w.r.t. $f(\cdot)$ with some parameter $\delta >0$. Let $\tilde{f}:\mbS^n\rightarrow\reals$ be differentiable and convex, and suppose that $\sup_{\X\in\mS_n}\Vert{\nabla{}f(\X) - \nabla\tilde{f}(\X)}\Vert_F \leq \nu$, for some $\nu > 0$. Then, for $\nu <  \frac{1}{2}(1+\frac{2\beta}{\delta})^{-1}\delta$, Assumption \ref{ass:gap} holds w.r.t. the function $\tilde{f}(\cdot)$ with parameter $\tilde{\delta} = \delta -  2\nu(1+\frac{2\beta}{\delta}) > 0$.
\end{lemma}

In Section \ref{sec:exp} we bring empirical motivation for Assumption \ref{ass:gap}.\\

In this paper we leverage Assumption \ref{ass:gap} to derive improved complexities for the Frank-Wolfe method, and certain variants of, all demonstrating linear rate of convergence for Problem \eqref{eq:optProb} (at least under smoothness of $f(\cdot)$). In fact, as we shall show in the sequel (see Lemma \ref{lem:genQG} in Section \ref{sec:fwAlgs}), Assumption \ref{ass:gap} in particular implies that Problem \eqref{eq:optProb} satisfies the quadratic growth property, which is well known to be useful for proving linear convergence rates for first-order methods (see for instance \cite{Drusvyatskiy18,Necoara19}). Nevertheless, even with such a property,  achieving faster rates for Frank-Wolfe-type methods is non-trivial since the standard $O(1/\epsilon)$ rate of the method is not improvable in general, even under strong convexity (see for instance \cite{Garber16a}). Here we should also mention that, while Assumption \ref{ass:gap} implies that the gradient vector of $f(\cdot)$ is non-zero over the feasible set, a property which is known to result in a linear convergence rate for the Frank-Wolfe method whenever the feasible set is strongly convex (see for instance \cite{Demyanov70,Garber2015faster}), in the case of Problem \eqref{eq:optProb} (and also for the related problems appearing in \eqref{eq:relatedProbs}), the feasible set is not strongly convex (or curved), and thus such arguments do not apply in our case.

We focus on the Frank-Wolfe method since i) aside from achieving faster convergence rates, we are also interested in methods that are computationally efficient, and in particular avoid high-rank singular value decompositions (SVD), and ii) the Frank-Wolfe method allows to easily incorporate line-search, which avoids the need to tune parameters, and in particular avoids the need to estimate the eigen-gap in Assumption \ref{ass:gap}. 

Concretely, our main algorithmic result in this paper is the proof of the following theorem, which we currently present only informally.
\begin{theorem}\label{thm:main:informal}[informal]
Under Assumption \ref{ass:gap}, the Frank-Wolfe method with line-search (Algorithm \ref{alg:fw}), finds an $\epsilon$-approximate solution (in function value) to Problem \eqref{eq:optProb}, after $O(\log{1/\epsilon})$ iterations (treating all other quantities, except for the dimension $n$, as constants). Moreover, it also finds in $O(\log{1/\epsilon})$ iterations a unit vector $\v$ such that $\Vert{\v\v^{\top}-\X^*}\Vert_F^2 \leq \epsilon$.
\end{theorem}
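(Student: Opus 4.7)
The plan is to combine three ingredients: (i) a quadratic-growth inequality derived from the spectral gap, (ii) a Davis--Kahan-style perturbation bound on the Frank-Wolfe vertex, and (iii) the standard line-search FW descent inequality applied in a two-phase fashion. For (i), diagonalize $\nabla{}f(\X^*) = \sum_{i=1}^n \mu_i \e_i \e_i^\top$ with $\e_n = \x^*$ and write $\X_{ii} = \e_i^\top \X \e_i$; since $\trace(\X) = \trace(\X^*) = 1$ and $\mu_i - \mu_n \geq \delta$ for all $i < n$, a short computation gives $\langle \nabla{}f(\X^*), \X - \X^*\rangle = \sum_{i<n}(\mu_i - \mu_n)\X_{ii} \geq \delta(1 - \X_{nn})$. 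Combined with $\Vert\X - \X^*\Vert_F^2 = \Vert\X\Vert_F^2 + 1 - 2\X_{nn} \leq 2(1 - \X_{nn})$ (using $\Vert\X\Vert_F \leq \trace(\X) = 1$ for PSD $\X\in\mS_n$) and the convex inequality $f(\X) - f^* \geq \langle \nabla{}f(\X^*), \X - \X^*\rangle$, this yields the global quadratic growth $f(\X) - f^* \geq \tfrac{\delta}{2}\Vert\X - \X^*\Vert_F^2$.

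For (ii), I will show that once $\X_t$ is sufficiently close to $\X^*$, the FW vertex $\v_t\v_t^\top$ (the outer product of the bottom eigenvector of $\nabla{}f(\X_t)$) is itself close to $\X^*$. By Weyl's inequality and $\beta$-smoothness, $\Vert\nabla{}f(\X_t) - \nabla{}f(\X^*)\Vert_{op} \leq \beta\Vert\X_t - \X^*\Vert_F$, so the spectral gap at the bottom eigenvalue of $\nabla{}f(\X_t)$ is at least $\delta - 2\beta\Vert\X_t - \X^*\Vert_F$. The Davis--Kahan $\sin\theta$ theorem then yields $\Vert\v_t\v_t^\top - \X^*\Vert_F \leq (c\beta/\delta)\Vert\X_t - \X^*\Vert_F$ as soon as $\beta\Vert\X_t - \X^*\Vert_F \leq \delta/4$, a condition implied via quadratic growth by $f(\X_t) - f^* \leq \epsilon_0 := \Theta(\delta^3/\beta^2)$.

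For (iii), I combine these with the standard line-search FW descent inequality $f(\X_t) - f(\X_{t+1}) \geq \min\{g_t/2,\ g_t^2/(2\beta \Vert\v_t\v_t^\top - \X_t\Vert_F^2)\}$, where $g_t := \langle \nabla{}f(\X_t), \X_t - \v_t\v_t^\top\rangle \geq f(\X_t) - f^*$ by convexity and feasibility of $\X^*$. When the long-step term attains the minimum, one iteration already halves the optimality gap. When the short-step term dominates and $\X_t$ is in the close regime $f(\X_t) - f^* \leq \epsilon_0$, the triangle inequality gives $\Vert\v_t\v_t^\top - \X_t\Vert_F^2 \leq (c\beta/\delta + 1)^2 \Vert\X_t - \X^*\Vert_F^2 \leq O(\beta^2/\delta^3)(f(\X_t) - f^*)$, and substituting together with $g_t \geq f(\X_t) - f^*$ yields the contraction $f(\X_{t+1}) - f^* \leq (1-\rho)(f(\X_t) - f^*)$ with $\rho = \Theta(\delta^3/\beta^3)$. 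Before the close regime is reached, the crude bound $\Vert\v_t\v_t^\top - \X_t\Vert_F^2 \leq 4$ together with the classical FW analysis guarantees $f(\X_t) - f^* = O(\beta/t)$, so $\epsilon_0$ is reached within a number of iterations depending only on $\beta,\delta$ and independent of $\epsilon$. The linear phase then delivers the claimed $O(\log(1/\epsilon))$ iteration bound, and the rank-one recovery claim follows immediately since in the close regime $\Vert\v_t\v_t^\top - \X^*\Vert_F^2 \leq O(\beta^2/\delta^3)(f(\X_t) - f^*)$. I expect the main technical obstacle to be carefully tracking the constants in the Davis--Kahan step and verifying that the warm-up phase contributes only $O(1)$ iterations in $\epsilon$, so that the overall bound remains $O(\log(1/\epsilon))$.
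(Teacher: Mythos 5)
Your proposal is correct and would prove the informal statement, but the mechanism behind the linear-rate step differs from the paper's. The shared ingredients are the quadratic-growth lemma (your item (i) is essentially the paper's Lemma \ref{lem:QG}), the burn-in via the standard $O(\beta/t)$ rate until $\Vert\X_t-\X^*\Vert_F$ (equivalently $\Vert\nabla f(\X_t)-\nabla f(\X^*)\Vert$) is a small multiple of $\delta/\beta$, Weyl's inequality to keep the gap of $\nabla f(\X_t)$ bounded below, and Davis--Kahan for the claim about $\v_t\v_t^{\top}$; the burn-in length $O((\beta/\delta)^3)$ also matches. Where you diverge is the contraction itself: the paper never routes the descent step through $\X^*$. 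Instead it re-runs the quadratic-growth computation at the current iterate (Lemma \ref{lem:rankoneFWgap}), obtaining $\langle{\X_t-\v_t\v_t^{\top},\nabla f(\X_t)}\rangle \geq \tfrac{\delta_{\X}}{2}\Vert{\X_t-\v_t\v_t^{\top}}\Vert_F^2$ directly from the eigen-gap of $\nabla f(\X_t)$, splits the linear term in the smoothness bound into one half used with convexity and one half used to cancel the quadratic term, and gets the rate $1-\min\{\delta/(12\beta),1/2\}$; Davis--Kahan is used only for the $\Vert\v_t\v_t^{\top}-\X^*\Vert_F^2$ guarantee. You instead control $\Vert\v_t\v_t^{\top}-\X_t\Vert_F$ by the triangle inequality through $\X^*$ (Davis--Kahan plus quadratic growth) and plug it into the generic two-case line-search progress bound $\min\{g_t/2,\,g_t^2/(2\beta D_t^2)\}$. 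This is more black-box but costs you a contraction factor $1-\Theta(\min\{\delta^3/\beta^3,1\})$ rather than the paper's $1-\Theta(\min\{\delta/\beta,1\})$; that is immaterial for the informal $O(\log 1/\epsilon)$ claim you were asked to prove, but it is strictly weaker than the paper's formal Theorem \ref{thm:rankoneFW}. (Your constant bookkeeping caveat — e.g.\ the bound $D_t^2=O(\beta^2/\delta^3)h_t$ implicitly assumes $\beta\gtrsim\delta$ — is harmless because the $\min$ with the long-step case absorbs the other regime.)
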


A formal and complete description of this result is given in Theorem \ref{thm:rankoneFW} in Section \ref{sec:fwAlgs}. In that section we also present two variants of the Frank-Wolfe method for Problem \eqref{eq:optProb} with improved complexities. In Section \ref{sec:RPCA} we present an extension of Assumption \ref{ass:gap} and Theorem \ref{thm:main:informal} to a class of problems that is motivated by the robust PCA problem and takes the form of minimizing a function of the sum of two blocks of variables, one corresponding to a rank-one matrix, and the other lies in some convex and compact set (see Assumption \ref{ass:gapMix} and Theorem \ref{thm:mainMix}). In Section \ref{sec:nonsmooth} we consider Problem \eqref{eq:optProb} in case the objective function is nonsmooth. Finally, in Section \ref{sec:exp} we present numerical simulations in the support of Assumption \ref{ass:gap} and also a preliminary comparison of the different Frank-Wolfe variants considered in this work.

Table \ref{table:results} gives a quick summary of our results concerning Problem \eqref{eq:optProb}.

After the first version of this work appeared online (see \cite{garber2019linear}), a followup work \cite{ding2020spectral} managed to extend the main result of this paper beyond the rank-one case by providing a Frank-Wolfe-type method with a linear convergence rate under the  assumption that there exists a unique optimal solution $\X^*$ with  $\rank(\X^*)=r^*\geq 1$ and under a natural modification of Assumption \ref{ass:gap}, i.e., that $\X^*$ satisfies an eigen-gap condition of the form: $\lambda_{n-r^*}(\nabla{}f(\X^*)) - \lambda_{n}(\nabla{}f(\X^*)) > 0$ (note that as opposed to the case $r^*=1$ considered here, in case $r^*>1$, this eigen-gap assumption is not sufficient to imply that there exists a unique optimal solution). In \cite{ding2020spectral} it was also established that such an eigen-gap assumption in the gradient at the optimal solution is equivalent to a \textit{strict complementarity} condition for Problem \eqref{eq:optProb}. Additionally, while the linear convergence results in this paper only require convexity and smoothness of $f(\cdot)$ and Assumption \ref{ass:gap}, the result in \cite{ding2020spectral} requires the objective function to be of the form $f(\X) = g(\mA\X)+\langle{\C,\X}\rangle$, where $g(\cdot)$ is smooth and strongly convex, $\mA$ is a linear map, and $\C\in\mbS^n$. Finally, and similarly to \cite{allen2017linear}, the algorithm in \cite{ding2020spectral} requires on each iteration a SVD computation of rank $k\geq{}r^*=\rank(\X^*)$.


\subsection{Additional related work}

In \cite{Zhou2017} the authors have considered an optimization problem closely related to \eqref{eq:optProb}, which takes the form of unconstrained minimization of a smooth convex function plus a nuclear norm regularizer. They showed that under the assumption that the objective is of the form $g(\mA\X)$ where $g$ is smooth and strongly convex and $\mA$ is a linear map, and assuming there exists an optimal solution which satisfies a condition somewhat similar to our Assumption \ref{ass:gap},  a proximal gradient method converges linearly for the problem. While their result allows to consider optimal solutions with arbitrary rank (not only one, as in our case), this current work has three main advantages: i) we do not require that the objective takes the form of strongly convex and smooth function applied to linear map which, while capturing several important applications, is also quite restrictive. Our result only requires the objective to be smooth (and we also obtain a result for nonsmooth problems). ii) \cite{Zhou2017} only establishes a linear convergence rate but does not detail how it depends on the natural parameters of the problem (such as the condition they require on the optimal solution). We on the other-hand, give fully-detailed convergence results with explicit dependency on all relevant parameters. iii) While the linear convergence rate in \cite{Zhou2017} is relevant to proximal gradient methods, these are often not considered the methods of choice for such problems because of the high complexity of computing the proximal step which can require high-rank SVD computations \footnote{in the close proximity of an optimal solution it is quite plausible that only low-rank SVD computations will be needed to compute the proximal step, see for instance our recent work \cite{Garber19}}. On the other hand, here we establish linear convergence rates for the Frank-Wolfe method and simple variants of, which require only rank-one SVD computation on each iteration, and hence are often more suitable for such problems. Moreover, the Frank-Wolfe method can be used with line-search which does not require any parameter tuning.

Finally, it is important to emphasize that there is a very active and recent research effort to analyze nonconvex optimization algorithms for low-rank matrix optimization problems, such as the ones mentioned above, with global convergence guarantees and often with linear convergence rates. However, these results are usually obtained in a statistical setting, in which the data is assumed to follow a very specific and potentially unrealistic statistical  model, see for instance \cite{Prateek10,netrapalli13,jain2014iterative,chen2015fast,bhojanapalli2016global,ge2016matrix} and references therein. On the contrary, in this work, we are free from any statistical assumption/model.

\begin{table*}\renewcommand{\arraystretch}{1.3}
{\footnotesize
\begin{center}
  \begin{tabular}{| p{2.4cm} | p{2.5cm} | p{1.1cm} | p{1.0cm} | p{0.6cm} | p{0.8cm} | p{3.2cm} |} \hline
    Algorithm  & assumption on objective $f(\cdot)$ & requires gap ($\delta$)? & burn-in phase & SVD rank &conv. rate & max iterate rank \\ \hline
FW (Alg \ref{alg:fw}, Thm \ref{thm:rankoneFW}) & smooth & x  & $\frac{\beta^3}{\delta^3}$ &1 & $e^{-\delta{}t/\beta}$ &$\min\{\frac{\beta}{\epsilon},\frac{\beta^3}{\delta^3} + \frac{\beta\log{}1/\epsilon}{\delta}\}$\\ \hline
FW (Alg \ref{alg:fw}, Thm \ref{thm:rankoneFWimprov}) & $g(\mA\X)+\langle{\C,\X}\rangle$, $g$ smooth \& s.c. & x  & $\frac{\beta^3}{\delta^2}$ &1 & $e^{-\delta{}t/\beta}$& $\min\{\frac{\beta}{\epsilon},\frac{\beta^3}{\delta^2} + \frac{\beta\log{}1/\epsilon}{\delta}\}$\\ \hline
FWPG (Alg \ref{alg:fwpg}, Thm \ref{thm:boundedRankAlg}) & smooth & x  & $\frac{\beta^3}{\delta^3}$ & 2& $e^{-\delta{}t/\beta}$ & $\min\{\frac{\beta}{\epsilon},~\frac{\beta^3}{\delta^3}\}$ \\ \hline
FWPG (Alg \ref{alg:fwpg}, Thm \ref{thm:boundedRankAlg}) & $g(\mA\X)+\langle{\C,\X}\rangle$, $g$ smooth \& s.c. & x  & $\frac{\beta^3}{\delta^2}$ &2 & $e^{-\delta{}t/\beta}$& $\min\{\frac{\beta}{\epsilon},~\frac{\beta^3}{\delta^3}\}$ \\ \hline
RegFW (Alg \ref{alg:regfw}, Thm \ref{thm:regFW}) & smooth & \checkmark  & x &1& $e^{-\delta{}t/\beta}$ & $\frac{\beta}{\delta}\log{1/\epsilon}$ \\ \hline
RegFW (Alg \ref{alg:regfw}, Thm \ref{thm:regFW_NS}) & nonsmooth & \checkmark  & x &1 & $e^{-\delta\epsilon{}t}$& $\frac{1}{\delta\epsilon}\log{1/\epsilon}$ \\ \hline
  \end{tabular}  
\caption{Summary of main results. In all cases $f(\cdot)$ is assumed convex. Burn-in phase is number of iterations in which the method converges with standard rate $\beta/t$, before shifting to the fast rate, SVD rank is the rank of SVD used on each iteration, conv. rate is the fast convergence rate after the initial burn-in phase, and max iterate rank gives an upper bound on the number of rank-one components in the representation of the iterate throughout the run, until reaching an $\epsilon$-approximate solution. The result for nonsmooth $f$ (last line), applies to a smooth $\epsilon$-approximation of $f$, see details in Section \ref{sec:nonsmooth}. All results are given in simplified form, omitting all constants except for $\epsilon,\delta,\beta$, and focusing on the most interesting cases. s.c. stands for strongly convex.}\label{table:results}
\end{center}
}
\end{table*}\renewcommand{\arraystretch}{1}

\subsection{Additional notation}
For real matrices we let $\Vert{\cdot}\Vert$ denote the spectral norm (i.e., largest singular value), and we let $\Vert{\cdot}\Vert_F$ denote the Frobenius (Euclidean) norm. For vectors in $\reals^n$ we let $\Vert{\cdot}\Vert_2$ denote the Euclidean norm. In any Euclidean space (e.g., $\reals^n$, $\mbS^n$), we let $\langle{\cdot,\cdot}\rangle$ denote the standard inner product. For a symmetric real matrix $\A\in\mbS^n$, when writing its eigen-decomposition $\A = \sum_{i=1}^n\lambda_i\u_i\u_i^{\top}$, we adopt the standard convention that $\lambda_1 \geq \lambda_2 \geq... \geq\lambda_n$, and that the eigenvectors $\u_1,\dots,\u_n$ form an orthonormal basis for $\reals^n$ (i.e., they have unit norm and are mutually orthogonal).

\subsection{View as a non-linear extension of the leading eigenvector problem}
Our main result (Theorem \ref{thm:main:informal}) could be seen as a faster reduction from nonlinear optimization problems for which the optimal solution is just a leading eigenvector of a certain matrix, to the standard leading eigenvector problem. 

Consider optimization problem \eqref{eq:optProb} in the special case in which $f(\X) = \langle{\X,\A}\rangle$, where $\A\in\mbS^n$, i.e., $f$ is a simple linear function. It is well known that in this case, Problem \eqref{eq:optProb} becomes a tight semidefinite relaxation to computing the leading eigenvector of the matrix $-\A$. In particular, the condition $\lambda_n(\A) < \lambda_{n-1}(\A)$ is sufficient and necessary for this problem to admit a unique optimal solution which is also rank-one (since the leading eigenvector of $-\A$ in this case is unique), i.e., the unique optimal solution is $\X^*=\x^*\x^{*\top}$, where $\x^*$ is the eigenvector corresponding to $\lambda_n(\A)$.

For such $f(\cdot)$ it clearly holds that $\nabla{}f(\X^*)=\A$. Thus, $\X^*$ in particular corresponds to the eigenvector of the smallest eigenvalue of the gradient vector at the optimal solution (or equivalently to the leading eigenvector of $-\nabla{}f(\X^*)$). Moreover, it is well known that standard iterative methods for leading eigenvector computation, such as the well-known power iterations method (see for instance \cite{golub2012matrix}), converge with linear rate when such an eigen-gap exists.

Indeed, Lemma \ref{lem:optSolStruct} shows that for smooth and convex $f$, the condition $\lambda_n(\nabla{}f(\X^*)) < \lambda_{n-1}(\nabla{}f(\X^*))$ is a sufficient condition so that $\X^*$ is a unique optimal solution and also rank-one. In particular, it also corresponds to the eigenvector associated with the smallest eigenvalue $\lambda_n(\nabla{}f(\X^*))$ (or equivalently, the largest eigenvalue of $-\nabla{}f(\X^*)$). We thus refer to such problems as nonlinear eigenvector problems.

Thus, given the arsenal of efficient methods for leading eigenvector computations, it is quite natural to ask if such nonlinear eigenvector problems could be reduced to solving only a short sequence of the standard leading eigenvector problem. The standard Frank-Wolfe analysis (e.g., \cite{Jaggi10}) provides such a reduction, but requires $O(1/\epsilon)$ leading eigenvector computations to find an $\epsilon$-approximated solution (treating all quantities except than $1/\epsilon$ as constants, for simplicity). To the best of our knowledge, Theorem \ref{thm:main:informal} gives the first reduction which requires only $O(\log{1/\epsilon})$ eigenvector computations without requiring the objective function to be strongly convex.

\begin{algorithm}
\caption{Frank-Wolfe with line-search for Problem \eqref{eq:optProb}}
\label{alg:fw}
\begin{algorithmic}[1]
\STATE $\X_1 \gets$ arbitrary point in $\mS_n$
\FOR{$t=1\dots$}
\STATE $\v_t \gets \EV(-\nabla{}f(\X_t))$ \COMMENT{compute an (approximated) leading eigenvector of $-\nabla{}f(\X_t)$}
\STATE choose step size $\eta_t\in[0,1]$ using one of the two options:
\begin{align*}
\textrm{Option 1:}& \quad \eta_t \gets \arg\min_{\eta\in[0,1]}f((1-\eta)\X_t+\eta\v_t\v_t^{\top}) \\
\textrm{Option 2:}& \quad \eta_t \gets \arg\min_{\eta\in[0,1]}f(\X_t) + \eta\langle{\v_t\v_t^{\top}-\X_t,\nabla{}f(\X_t)}\rangle + \frac{\eta^2\beta}{2}\Vert{\X_t-\v_t\v_t^{\top}}\Vert_F^2
\end{align*}
\STATE $\X_{t+1} \gets (1-\eta_t)\X_t+\eta_t\v_t\v_t^{\top}$
\ENDFOR
\end{algorithmic}
\end{algorithm}

\section{Frank-Wolfe-Type Algorithms for Problem \eqref{eq:optProb}}\label{sec:fwAlgs}

\subsection{Proof of Theorem \ref{thm:main:informal}}
We begin with a lemma that will be key to deriving novel bounds on the convergence of Algorithm \ref{alg:fw} under Assumption \ref{ass:gap}. This lemma also establishes that Assumption \ref{ass:gap} implies that Problem \eqref{eq:optProb} satisfies a quadratic growth property.

\begin{lemma}\label{lem:genQG}
Let $\X\in\mS_n$ and suppose that $\lambda_{n-1}(\nabla{}f(\X)) - \lambda_{n}(\nabla{}f(\X)) \geq \delta_{\X}$ for some $\delta_{\X} >0$. Let $\u_n$ be an eigenvector of $\nabla{}f(\X)$ associated with the eigenvalue $\lambda_n(\nabla{}f(\X))$. Then, 
\begin{align}\label{eq:genQG:res1}
\forall \Y\in\mS_n:~ \langle{\Y-\u_n\u_n^{\top},\nabla{}f(\X)}\rangle \geq \delta_{\X}(1-\u_n^{\top}\Y\u_n) \geq \frac{\delta_{\X}}{2}\Vert{\Y-\u_n\u_n^{\top}}\Vert_F^2.
\end{align}
In particular, this implies that if $\X^*$ is an optimal solution for Problem \eqref{eq:optProb} which satisfies Assumption \ref{ass:gap} with parameter $\delta$, then Problem \eqref{eq:optProb} has the quadratic growth property, that is 
\begin{align}\label{eq:genQG:res2}
\forall \Y\in\mS_n:\quad \Vert{\Y-\X^*}\Vert_F^2 \leq \frac{2}{\delta}(f(\Y)-f^*).
\end{align}
\end{lemma}

\begin{proof}
Fix some $\Y\in\mS_n$ and let us write the eigen-decomposition of $\nabla{}f(\X)$ as $\nabla{}f(\X)=\sum_{i=1}^n\lambda_i\u_i\u_i^{\top}$, where the eigenvalues are ordered in non-increasing order. It holds that
\begin{align*}
\langle{\Y-\u_n\u_n^{\top},\nabla{}f(\X)}\rangle &= \langle{\Y-\u_n\u_n^{\top},\sum_{i=1}^n\lambda_i\u_i\u_i^{\top}}\rangle{=} \sum_{i=1}^n\lambda_i\u_i^{\top}\Y\u_i - \lambda_n \nonumber \\
&{\geq} (\lambda_n+\delta_{\X})\sum_{i=1}^{n-1}\u_i^{\top}\Y\u_i + \lambda_n\u_n^{\top}\Y\u_n - \lambda_n \nonumber\\
&{=}\delta_{\X}\sum_{i=1}^{n-1}\u_i^{\top}\Y\u_i {=}\delta_{\X}\left({1 - \u_n^{\top}\Y\u_n}\right),
\end{align*}
where the last two equalities follow since $\sum_{i=1}^n\u_i^{\top}\Y\u_i = 1$.


Now, since since $\Vert{\Y}\Vert_F \leq 1$, we can write
\begin{align}\label{eq:genQG:1}
\langle{\Y-\u_n\u_n^{\top},\nabla{}f(\X)}\rangle &\geq \delta_{\X}(1 - \u_n^{\top}\Y\u_n) \geq \frac{\delta_{\X}}{2}\left({\Vert{\u_n\u_n^{\top}}\Vert^2 + \Vert{\Y}\Vert_F^2 - 2\u_n^{\top}\Y\u_n}\right) \nonumber \\
&= \frac{\delta_{\X}}{2}\Vert{\Y-\u_n\u_n^{\top}}\Vert_F^2.
\end{align}

To prove the quadratic growth consequence under Assumption \ref{ass:gap} (Eq. \eqref{eq:genQG:res2}), we recall that it follows from Lemma \ref{lem:optSolStruct} that $\X^*$ is a rank-one matrix which corresponds to the eigenvector of $\nabla{}f(\X^*)$ associated with the lowest eigenvalue. Thus, by invoking Eq. \eqref{eq:genQG:1} with $\X = \u_n\u_n^{\top} = \X^*$ and $\delta_{\X} = \delta$ (where $\delta$ is as defined in Assumption \ref{ass:gap}), we indeed have that
\begin{align*}
\frac{\delta}{2}\Vert{\Y-\X^*}\Vert_F^2 \leq \langle{\Y-\X^*,\nabla{}f(\X^*)}\rangle \leq f(\Y)-f(\X^*),
\end{align*}
where the last inequality follows from convexity.
\end{proof}

\begin{theorem}[formal version of Theorem \ref{thm:main:informal}]\label{thm:rankoneFW}
Let $\{\X_t\}_{t\geq 1}$ be a sequence produced by Algorithm \ref{alg:fw} and denote for all $t\geq 1$: $h_t:=f(\X_t)-f^*$. Then,
\begin{eqnarray}\label{eq:main:res1}
\forall t\geq 1:\qquad h_t = O\left({\beta/t}\right).
\end{eqnarray}
Moreover, if Assumption \ref{ass:gap} holds then there exists $T_0 = O\left({(\beta/\delta)^3}\right)$ such that
\begin{eqnarray}\label{eq:main:res2}
\forall t\geq T_0: \quad h_{t+1} \leq h_t\left({1-\min\Big\{\frac{\delta}{12\beta},\frac{1}{2}\Big\}}\right).
\end{eqnarray}
Finally, if Assumption \ref{ass:gap} holds then  it also holds that
\begin{eqnarray}\label{eq:main:res3}
\forall t\geq 1: \quad \Vert{\v_t\v_t^{\top}-\X^*}\Vert_F^2 = O\left({\frac{\beta^2}{\delta^3}h_t}\right),
\end{eqnarray}
where $\v_t$ is the eigenvector computed in line 3 of the algorithm.
\end{theorem}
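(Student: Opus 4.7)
My plan is to prove the three claims of Theorem \ref{thm:rankoneFW} in order, combining the standard Frank--Wolfe descent with Lemma \ref{lem:QG} (quadratic growth) and Lemma \ref{lem:rankoneFWgap} (sharper FW-gap bound).

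\textbf{Part 1, universal $O(\beta/t)$ rate.} For \eqref{eq:main:res1} I would invoke the textbook Frank--Wolfe argument: both line-search options yield a point no worse than the quadratic-majorant minimizer, so for any $\eta\in[0,1]$,
\[
h_{t+1}\le h_t+\eta\langle{\v_t\v_t^\top-\X_t,\nabla{}f(\X_t)}\rangle+\frac{\eta^2\beta}{2}\Vert{\v_t\v_t^\top-\X_t}\Vert_F^2.
\]
Since $\v_t$ is a leading eigenvector of $-\nabla{}f(\X_t)$, the FW gap $g_t:=\langle{\X_t-\v_t\v_t^\top,\nabla{}f(\X_t)}\rangle$ satisfies $g_t\ge h_t$, and the diameter bound on $\mS_n$ gives $\Vert{\v_t\v_t^\top-\X_t}\Vert_F^2\le 2$. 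The standard induction then produces $h_t=O(\beta/t)$.

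\textbf{Part 2, linear rate after burn-in.} The crux is transferring the eigen-gap from $\X^*$ to $\X_t$. By $\beta$-smoothness and $\Vert{\cdot}\Vert\le\Vert{\cdot}\Vert_F$,
\[
\Vert{\nabla{}f(\X_t)-\nabla{}f(\X^*)}\Vert\le\beta\Vert{\X_t-\X^*}\Vert_F,
\]
and Lemma \ref{lem:QG} gives $\Vert{\X_t-\X^*}\Vert_F\le\sqrt{2h_t/\delta}$. Weyl's inequality then yields
\[
\lambda_{n-1}(\nabla{}f(\X_t))-\lambda_n(\nabla{}f(\X_t))\ge\delta-2\beta\sqrt{2h_t/\delta},
\]
which is at least $\delta/2$ once $h_t\le c\delta^3/\beta^2$ for a small absolute constant $c$. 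Combined with Part 1, this takes at most $T_0=O((\beta/\delta)^3)$ iterations. In this regime, Lemma \ref{lem:rankoneFWgap} with $\delta_{\X_t}\ge\delta/2$ gives $g_t\ge(\delta/4)D_t$, where $D_t:=\Vert{\X_t-\v_t\v_t^\top}\Vert_F^2$. Plugging the template step $\eta^\star=\min\{g_t/(\beta D_t),1\}$ (which line-search only improves upon) into the descent inequality yields either
\[
h_{t+1}\le h_t-\frac{g_t^2}{2\beta D_t}\le h_t-\frac{\delta g_t}{8\beta}\le h_t\Big(1-\frac{\delta}{8\beta}\Big),
\]
using $g_t\ge h_t$ and $D_t\le 4g_t/\delta$, or, in the full-step regime $\eta^\star=1$, $h_{t+1}\le h_t-g_t/2\le h_t/2$. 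Combining the two cases gives \eqref{eq:main:res2} up to the exact constant. I would then verify inductively that $h_{t+1}$ stays below the burn-in threshold so that the gap-transfer argument propagates to all subsequent iterates.

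\textbf{Part 3, eigenvector convergence.} For \eqref{eq:main:res3} I would use a $\sin\theta$ perturbation bound. By smoothness and Lemma \ref{lem:QG} again, $\Vert{\nabla{}f(\X_t)-\nabla{}f(\X^*)}\Vert\le\beta\sqrt{2h_t/\delta}$. Since $\x^*$ is, by Lemma \ref{lem:optSolStruct}, the eigenvector of $\nabla{}f(\X^*)$ associated with its smallest (simple) eigenvalue, separated by $\delta$ from the rest, the Davis--Kahan theorem gives $\sin\theta(\v_t,\x^*)\le 2\Vert{\nabla{}f(\X_t)-\nabla{}f(\X^*)}\Vert/\delta$ whenever this ratio is at most one. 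Hence
\[
\Vert{\v_t\v_t^\top-\X^*}\Vert_F^2=2\bigl(1-(\v_t^\top\x^*)^2\bigr)=2\sin^2\theta(\v_t,\x^*)\le\frac{16\beta^2 h_t}{\delta^3}.
\]
When the perturbation is too large for Davis--Kahan, the inequality holds trivially since $\Vert{\v_t\v_t^\top-\X^*}\Vert_F^2\le 2$ while the right-hand side already exceeds $2$.

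\textbf{Expected main obstacle.} The subtlest step is the inductive bookkeeping in Part 2: I must show the contraction itself keeps $h_t$ below the burn-in threshold so that Weyl's inequality continues to certify an eigen-gap of at least $\delta/2$ at every subsequent iterate, and I need to handle the switching between the fractional-step and full-step regimes carefully in order to recover the stated constant $\min\{\delta/(12\beta),1/2\}$ rather than the looser $\min\{\delta/(8\beta),1/2\}$ produced by the direct calculation above.
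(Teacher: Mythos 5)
Your proposal is correct and follows essentially the same route as the paper: certify an eigen-gap of $\Omega(\delta)$ at $\nabla f(\X_t)$ via smoothness, Lemma \ref{lem:QG} and Weyl's inequality after an $O((\beta/\delta)^3)$ burn-in supplied by the $O(\beta/t)$ rate, feed Lemma \ref{lem:rankoneFWgap} into the line-search descent inequality for the contraction, and combine Davis--Kahan with Lemma \ref{lem:QG} for \eqref{eq:main:res3}. The only deviation is the Part-2 bookkeeping (the paper splits the linear term in half and picks $\eta=\delta/(6\beta)$, whereas you use the Frank--Wolfe gap $g_t\ge h_t$, the bound $D_t\le 4g_t/\delta$, and the step $\min\{g_t/(\beta D_t),1\}$); this yields the factor $1-\min\{\delta/(8\beta),1/2\}$, which is \emph{stronger} than the stated $1-\min\{\delta/(12\beta),1/2\}$ and hence implies it, and the propagation beyond $T_0$ that you flag needs no induction, since $h_t=O(\beta/t)$ for all $t$ (or simply monotonicity of the line-search, as $\eta=0$ is feasible) keeps every iterate $t\ge T_0$ below the burn-in threshold.
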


\begin{proof}
The first part of the theorem (Eq. \eqref{eq:main:res1}) follows from standard results on the convergence of Frank-Wolfe with line-search, see for instance \cite{Jaggi13b}.

To prove the second part (Eq. \eqref{eq:main:res2}), we note that using the quadratic growth property (Eq. \eqref{eq:genQG:res2}), we have that for all $t\geq 1$
\begin{align*}
\Vert{\nabla{}f(\X_t)-\nabla{}f(\X^*)}\Vert_F \leq \beta\Vert{\X_t-\X^*}\Vert_F \underset{(a)}{\leq} \beta\sqrt{2\delta^{-1}h_t} \underset{(b)}{=} O\left({\beta\sqrt{\delta^{-1}\beta/t}}\right),
\end{align*}
where (a) follows from Eq. \eqref{eq:genQG:res2} and (b) follows from the first part of the theorem (Eq. \eqref{eq:main:res1}). Thus, for some $T_0 = O((\beta/\delta)^3)$ we have that
\begin{eqnarray}\label{eq:main:0}
\forall t\geq T_0: \quad \Vert{\nabla{}f(\X_t) - \nabla{}f(\X^*)}\Vert_F \leq \frac{\delta}{3}.
\end{eqnarray}

Let us denote the eigen-decomposition of $\nabla{}f(\X_t)$ as $\nabla{}f(\X_t) = \sum_{i=1}^n\lambda_i\u_i\u_i^{\top}$, where the eigenvalues are ordered in non-increasing order. In particular, using Weyl's inequality for the eigenvalues  we have that
\begin{align}\label{eq:main:1}
\lambda_{n-1}-\lambda_n &= \lambda_{n-1}(\nabla{}f(\X^*))-\lambda_n(\nabla{}f(\X^*)) \nonumber \\
&+ (\lambda_{n-1}-\lambda_{n-1}(\nabla{}f(\X^*))) + (\lambda_{n}(\nabla{}f(\X^*)) - \lambda_{n}) \nonumber \\
& \geq  \lambda_{n-1}(\nabla{}f(\X^*))-\lambda_n(\nabla{}f(\X^*)) - 2\Vert{\nabla{}f(\X_t)-\nabla{}f(\X^*)}\Vert_F \nonumber \\
& \geq \delta - \frac{2\delta}{3} = \frac{\delta}{3}.
\end{align}

Let us now recall the Frank-Wolfe update on iteration $t$ of the algorithm: $\X_{t+1} \gets \X_t + \eta_t(\v_t\v_t^{\top}-\X_t)$. Since $\lambda_n < \lambda_{n-1}$, we have that the FW linear subproblem admits a unique optimal solution (the eigenvector $\u_n$), and we can substitute $\v_t$ with $\u_n$, i.e., set $\v_t = \pm{}\u_n$. 

Note that both line-search options in the algorithm imply that
\begin{align}\label{eq:main:LS}
\forall \eta\in[0,1]: ~ f(\X_{t+1}) &\leq f(\X_t) + \eta\langle{\v_t\v_t^{\top}-\X_t),\nabla{}f(\X_t)}\rangle + \frac{\eta^2\beta}{2}\Vert{\v_t\v_t^{\top}-\X_t}\Vert_F^2 \nonumber \\
&= f(\X_t) + \eta\langle{\u_n\u_n^{\top}-\X_t,\nabla{}f(\X_t)}\rangle + \frac{\eta^2\beta}{2}\Vert{\u_n\u_n^{\top}-\X_t}\Vert_F^2,
\end{align}
where the first inequality is due to the smoothness of $f(\cdot)$.

Now, subtracting $f(\X^*)$ from both sides and using Eq. \eqref{eq:genQG:res1} from Lemma \ref{lem:genQG} with $\X = \Y=\X_t$ and  gap $\delta_{\X} = \delta/3$, we have that for all  $0\leq \eta \leq \frac{\delta}{3\beta}$:
\begin{align*}
h_{t+1} &\leq h_t + \left({\eta - \frac{3\eta^2\beta}{\delta}}\right)\langle{\u_n\u_n^{\top}-\X_t,\nabla{}f(\X_t)}\rangle \leq \left({1-  \left({\eta - \frac{3\eta^2\beta}{\delta}}\right)}\right)h_t,
\end{align*}
where the last inequality follows from the convexity of $f(\cdot)$.

We now consider two cases. If $\frac{\delta}{6\beta}\leq 1$, then setting $\eta = \frac{\delta}{6\beta}$ we have that $h_{t+1} \leq \left({1- \frac{\delta}{12\beta}}\right)h_t$.
Otherwise, setting $\eta =1$ and using the fact that $\delta > 6\beta$ we have that $h_{t+1} \leq \frac{1}{2}h_t$.
Overall, we have that for all $t\geq T_0$,
\begin{align*}
h_{t+1} \leq h_t\left({1-\min\{\frac{\delta}{12\beta},\frac{1}{2}\}}\right),
\end{align*}
which proves the second part of the theorem (Eq. \eqref{eq:main:res2}).

Finally, we turn to prove the third part of the theorem  (Eq. \eqref{eq:main:res3}). Recall that under Assumption \ref{ass:gap}, Lemma \ref{lem:optSolStruct} implies that $\X^*$ is a rank-one matrix corresponding to the eigenvector of $\nabla{}f(\X^*)$ associated with the smallest eigenvalue.
By applying Eq. \eqref{eq:genQG:res1} from Lemma \ref{lem:genQG} with $\X=\X^* ,\Y=\v_t\v_t^{\top}$ we have that
\begin{align}\label{eq:main:3}
\Vert{\v_t\v_t^{\top}-\X^*}\Vert_F^2 &\leq \frac{2}{\delta}\langle{\v_t\v_t^{\top}-\X^*,\nabla{}f(\X^*)}\rangle
\nonumber \\
&\underset{(a)}{\leq} \frac{2}{\delta}\langle{\v_t\v_t^{\top}-\X^*,\nabla{}f(\X^*)-\nabla{}f(\X_t)}\rangle \nonumber \\
& \underset{(b)}{\leq} \frac{2\beta}{\delta}\Vert{\v_t\v_t^{\top}-\X^*}\Vert_F\Vert{\X^*-\X_t}\Vert_F \nonumber \\
& \underset{(c)}{\leq} \frac{2\sqrt{2}\beta}{\delta^{3/2}}\Vert{\v_t\v_t^{\top}-\X^*}\Vert_F\sqrt{h_t},
\end{align}
where (a) follows since by definition of $\v_t$ we have that $\langle{\v_t\v_t^{\top}-\X^*,\nabla{}f(\X_t)}\rangle \leq 0$, (b) follows from the Cauchy-Schwarz inequality and the smoothness of $f(\cdot)$, and (c) follows from the quadratic growth property (Eq. \eqref{eq:genQG:res2}).

Rearranging, we indeed get
\begin{align}\label{eq:main:2}
\Vert{\v_t\v_t^{\top}-\X^*}\Vert_F^2 \leq \frac{8\beta^2h_t}{\delta^3}.
\end{align}
\end{proof}

\paragraph{Remark:} Efficient methods for leading eigenvector computations, as required by Algorithm \ref{alg:fw} and all other algorithms we consider in this work, do not produce an accurate solution, but only an approximated leading eigenvector. However, since accounting for these possible approximation errors in the convergence analysis is straight-forward (see for instance \cite{Jaggi10,Jaggi13b,Garber16a}), for ease of presentation we assume all such computations are accurate.

\subsection{Some improvements to Theorem \ref{thm:rankoneFW} under additional structure of objective}

We note that the dependence on $\delta$ in terms of number of iterations until entering the regime of linear convergence (Eq. \eqref{eq:main:res2}) and the distance to the optimal rank-one solution (Eq. \eqref{eq:main:res3}) in Theorem \ref{thm:rankoneFW}, could be quite high (scales with $\delta^{-3}$). We now show that for an important family of structured objective functions, namely those captured by the following Assumption \ref{ass:funcStruct}, this dependence can be improved without changing Algorithm \ref{alg:fw} and with only minor changes to the proof of Theorem \ref{thm:rankoneFW}.

\begin{assumption}\label{ass:funcStruct}
The function $f(\cdot)$ is of the form $f(\X) = g(\mA\X) + \langle{\C,\X}\rangle$, where $\mA:\mbS^n\rightarrow\reals^p$ is a linear map, $g:\reals^p\rightarrow\reals$ is $\beta_g$-smooth and $\alpha_g$-strongly convex, and $\C\in\mbS^n$.
\end{assumption}

We note that Assumption \ref{ass:funcStruct} is also an underlying assumption in \cite{Zhou2017} which, as discussed in the related work section, studied linear convergence rates for proximal gradient methods for the highly related problem of smooth convex minimization with nuclear norm regularization.

In the following we let $\Vert{\mA}\Vert$ denote the operator norm of the map $\mA$, i.e., $\Vert{\mA}\Vert = \max_{\x\in\reals^p,\Vert{\x}\Vert_2=1}\Vert{\mA^{\top}\x}\Vert_F$.

\begin{theorem}\label{thm:rankoneFWimprov}

Let $\{\X_t\}_{t\geq 1}$ be a sequence produced by Algorithm \ref{alg:fw} and denote for all $t\geq 1$: $h_t:=f(\X_t)-f^*$. If both Assumption \ref{ass:gap} and Assumption \ref{ass:funcStruct} hold, then there exists $T_0 =  O\left({\frac{\Vert{\mA}\Vert^3\beta_g^3}{\alpha_g\delta^2}}\right)$ such that
\begin{eqnarray}
\forall t\geq T_0: \quad h_{t+1} \leq h_t\left({1-\min\Big\{\frac{\delta}{12\beta},\frac{1}{2}\Big\}}\right).
\end{eqnarray}
Moreover, 
\begin{eqnarray}
\forall t\geq 1: \quad \Vert{\v_t\v_t^{\top}-\X^*}\Vert_F^2 = O\left({\frac{\beta_g^2\Vert{\mA}\Vert^2}{\alpha_g\delta^2}h_t}\right),
\end{eqnarray}
where $\v_t$ is the eigenvector computed in line 3 of the algorithm.

\end{theorem}

In a nutshell, Theorem \ref{thm:rankoneFWimprov} replaces a factor of $\delta^{-1}$ with a factor of $\alpha_g^{-1}$ in the constant $T_0$ and the RHS of guarantee \eqref{eq:main:res3} in Theorem \ref{thm:rankoneFW}. In order to demonstrate the possible improvement, consider the highly popular case in which the objective function $f(\cdot)$ is a least-squares objective, i.e., $f(\X) = \frac{1}{2}\Vert{\mA\X-\b}\Vert_2^2$, where $\mA$ is a linear map. In this case we have $g(\z) := \frac{1}{2}\Vert{\z-\b}\Vert_2^2$ and so $\alpha_g = 1$, while the eigen-gap $\delta$ can be arbitrarily small. Thus, replacing a factor of $\delta^{-1}$ in favor of $\alpha_g^{-1}$ in the bounds, can be quite significant.

\begin{proof}
Under the additional structural assumption on $f(\cdot)$, it clearly holds that for any $\X,\Y\in\mbS^n$, 
\begin{align*}
\Vert{\nabla{}f(\X) - \nabla{}f(\Y)}\Vert_F = \Vert{\mA^{\top}(\nabla{}g(\mA\X)-\nabla{}g(\mA\Y))}\Vert_F \leq \Vert{\mA}\Vert\beta_g\Vert{\mA\X-\mA\Y}\Vert_2.
\end{align*}
Using the strong convexity of $g(\cdot)$, we have that for all $\X\in\mS_n$, 
\begin{align*}
\Vert{\mA\X-\mA\X^*}\Vert_2 &\leq \sqrt{\frac{2}{\alpha_g}\left({g(\mA\X)+\langle{\C,\X}\rangle-g(\mA\X^*)-\langle{\C,\X^*}\rangle}\right)} \\
&= \sqrt{\frac{2}{\alpha_g}\left({f(\X)-f(\X^*)}\right)}.
\end{align*}
Thus, for any iteration $t$ of Algorithm \ref{alg:fw}, it holds that
\begin{align}\label{eq:improvGradDist:1}
\Vert{\nabla{}f(\X_t) - \nabla{}f(\X^*)}\Vert_F \leq \sqrt{\frac{2\Vert{\mA}\Vert^2\beta_g^2}{\alpha_g}h_t}.
\end{align}

We can now plug-in Eq. \eqref{eq:main:res1} and further obtain that
\begin{align}\label{eq:improvGradDist:2}
\Vert{\nabla{}f(\X_t) - \nabla{}f(\X^*)}\Vert_F = O\left({\sqrt{\frac{\Vert{\mA}\Vert^3\beta_g^3}{\alpha_gt}}}\right),
\end{align}
where we have used the fact that the smoothness parameter of $f$ is at most $\beta_g\Vert{\mA}\Vert$.

Now, we can see that in-order to obtain the bound  \eqref{eq:main:0} in the proof of Theorem \ref{thm:rankoneFW}, it indeed suffices to take $T_0 = O\left({\frac{\Vert{\mA}\Vert^3\beta_g^3}{\alpha_g\delta^2}}\right)$, which proves the first part of the theorem.

To prove the second part, we observe that using \eqref{eq:improvGradDist:1}, Eq. \eqref{eq:main:3} in the proof of Theorem \ref{thm:rankoneFW} could now be replaced with:
\begin{align*}
\Vert{\v_t\v_t^{\top}-\X^*}\Vert_F^2 &\leq  \frac{2}{\delta}\langle{\v_t\v_t^{\top}-\X^*,\nabla{}f(\X^*)-\nabla{}f(\X_t)}\rangle \nonumber \\
& \leq \frac{2}{\delta}\Vert{\v_t\v_t^{\top}-\X^*}\Vert_F\Vert{\nabla{}f(\X^*)-\nabla{}f(\X_t)}\Vert_F \nonumber \\
&{\leq} \frac{2}{\delta}\Vert{\v_t\v_t^{\top}-\X^*}\Vert_F\sqrt{\frac{2\Vert{\mA}\Vert^2\beta_g^2}{\alpha_g}h_t}.
\end{align*}

Rearranging, we get 

\begin{align*}
\Vert{\v_t\v_t^{\top}-\X^*}\Vert_F^2 \leq  \frac{8\beta_g^2\Vert{\mA}\Vert^2}{\alpha_g\delta^2}h_t.
\end{align*}
\end{proof}

\subsubsection{Verifying Assumption \ref{ass:gap} under Assumption \ref{ass:funcStruct}}\label{sec:verify}
We now describe how under Assumption \ref{ass:funcStruct} one can obtain a practically verifiable lower-bound for the parameter $\delta$ in Assumption \ref{ass:gap} (provided that it is indeed greater than zero).

Suppose Assumption \ref{ass:funcStruct} holds and let $\X^*$ denote an optimal solution to Problem \eqref{eq:optProb}. Combining Weyl's inequality for the eigenvalues and Eq. \eqref{eq:improvGradDist:1} from the proof of Theorem \ref{thm:rankoneFWimprov}, we have that for any $t\geq 1$, the matrix $\X_t$ from Algorithm \ref{alg:fw}, satisfies
\begin{align*}
\forall i\in[n]: \quad \vert{\lambda_i(\nabla{}f(\X_t)) - \lambda_i(\nabla{}f(\X^*))}\vert \leq \sqrt{\frac{2\Vert{\mA}\Vert^2\beta_g^2}{\alpha_g}(f(\X_t)-f(\X^*))}.
\end{align*}
This implies that
\begin{align}\label{eq:eigengapEst}
&\vert{(\lambda_{n-1}(\nabla{}f(\X^*)) - \lambda_n(\nabla{}f(\X^*))) - (\lambda_{n-1}(\nabla{}f(\X_t)) - \lambda_n(\nabla{}f(\X_t)))}\vert \leq \nonumber \\
&\leq 2 \sqrt{\frac{2\Vert{\mA}\Vert^2\beta_g^2}{\alpha_g}(f(\X_t)-f(\X^*))}.
\end{align}

Suppose now that Assumption \ref{ass:gap} indeed holds with parameter $\delta > 0$. Using Eq. \eqref{eq:eigengapEst}, once we arrive at an iteration $t$ for which it holds that the RHS of \eqref{eq:eigengapEst} is smaller for instance than $\delta/3$, by computing the eigenvalues $\lambda_{n-1}(\nabla{}f(\X_t)), \lambda_n(\nabla{}f(\X_t))$\footnote{extending this discussion to the case in which these eigenvalues are only approximated up to sufficient precision is straightforward}, we can verify that Assumption \ref{ass:gap} holds with parameter at least $\delta/3$, and in particular that there exists a unique optimal solution which is also rank-one.

Note that in order to verify that the RHS of \eqref{eq:eigengapEst} is indeed smaller than $\delta/3$, it suffices to replace the approximation error $f(\X_t)-f(\X^*)$ with the simple upper-bound $\langle{\X_t-\v_t\v_t^{\top},\nabla{}f(\X_t)}\rangle$, where $\v_t$ is the eigenvector computed on iteration $t$ of Algorithm \ref{alg:fw}\footnote{this quantity is known as the duality gap and it is indeed an upper-bound on the approximation error since $f(\cdot)$ is convex, see \cite{Jaggi13b}}. Note that for Algorithm \ref{alg:fw}, it is known that the non-negative quantity $\langle{\X_t-\v_t\v_t^{\top},\nabla{}f(\X_t)}\rangle$ also converges to zero as a function of $t$, with rate at least $O(1/t)$ (without requiring Assumption \ref{ass:gap}) \cite{Jaggi13b}.


\subsection{Bounded-rank algorithm}
Despite the linear convergence result for the Frank-Wolfe method detailed in Theorem \ref{thm:rankoneFW}, still a certain disadvantage is that the rank of the iterates (or number of rank-one components that needs to be stored in memory to maintain a factorization of the current iterate $\X_t$) grows linearly with the iteration counter $t$. We now suggest a simple modification, that actually combines the Frank-Wolfe method and the projected gradient method, and guarantees that the number of rank-one components is always bounded and is independent of $1/\epsilon$, where $\epsilon$ is the target accuracy. 

The main idea is to use the recent results in \cite{Garber19} which show that under Assumption \ref{ass:gap}, in a ball of radius $\Theta(\delta/\beta)$ around $\X^*$, the projected gradient method, when applied to Problem \eqref{eq:optProb}, will always produce iterates that are rank-one. Moreover, whether the projection is indeed rank-one or not could be verified by examining the first and second leading eigenvalues of the corresponding matrix. This leads to an algorithm that  applies either conditional gradient steps or projected gradient steps (when the projection is rank-one), until entering the above mentioned ball around $\X^*$. Once the iterates are inside the ball, it is guaranteed that only projected gradient steps which result in a rank-one matrix will be used, and thus from this point on, only a single rank-one matrix needs to be stored in memory.

This modification comes with the price that now each iteration of the algorithm (see Algorithm \ref{alg:fwpg} below) requires, in worst case, a rank-two SVD computation of a $n\times n$ matrix, and an additional one leading eigenvector computation.

\begin{algorithm}
\caption{Frank-Wolfe meets Projected Gradient for Problem \eqref{eq:optProb}}
\label{alg:fwpg}
\begin{algorithmic}[1]
\STATE input: smoothness parameter $\beta$
\STATE let $\X_1$ be an arbitrary point in $\mS_n$
\FOR{$t=1\dots$}
\STATE $\Y_{t+1} \gets \X_t - \frac{1}{\beta}\nabla{}f(\X_t)$
\STATE  let $\lambda_1\u_1\u_1^{\top}+\lambda_2\u_2\u_2^{\top}$ be the rank-two truncated eigen-decomposition of $\Y_{t+1}$ (i.e., taking the two leading components with largest eigenvalues)
\IF{$\lambda_1 \geq 1 + \lambda_2$}
\STATE $\X_{t+1} \gets \u_1\u_1^{\top}$
\ELSE
\STATE $\v_t \gets \EV(-\nabla{}f(\X_t))$
\STATE choose step size $\eta_t\in[0,1]$ using one of the two options:
\begin{align*}
\textrm{Option 1:}& \quad \eta_t \gets \arg\min_{\eta\in[0,1]}f((1-\eta)\X_t+\eta\v_t\v_t^{\top}) \\
\textrm{Option 2:}& \quad \eta_t \gets \arg\min_{\eta\in[0,1]}f(\X_t) + \eta\langle{\v_t\v_t^{\top}-\X_t,\nabla{}f(\X_t)}\rangle + \frac{\eta^2\beta}{2}\Vert{\X_t-\v_t\v_t^{\top}}\Vert_F^2
\end{align*}
\STATE $\X_{t+1} \gets (1-\eta_t)\X_t+\eta_t\v_t\v_t^{\top}$
\ENDIF
\ENDFOR
\end{algorithmic}
\end{algorithm}

\begin{theorem}\label{thm:boundedRankAlg}
The sequence $\{\X_t\}_{t\geq 1}$ produced by Algorithm \ref{alg:fwpg} has all the guarantees stated in Theorem \ref{thm:rankoneFW} (or Theorem \ref{thm:rankoneFWimprov} if Assumption \ref{ass:funcStruct} also holds). Moreover, there exists $T_1 = O\left({(\beta/\delta)^3}\right)$, such that for all $t\geq T_1$ it holds that $\rank(\X_t) =1$. 

\end{theorem}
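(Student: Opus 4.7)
The plan is to decompose the argument into two pieces: (i) every inequality used in the proof of Theorem~\ref{thm:rankoneFW} (and Theorem~\ref{thm:rankoneFWimprov}) continues to hold for the iterates of Algorithm~\ref{alg:fwpg}, and (ii) the projected-gradient branch is triggered for all $t \geq T_1 = O((\beta/\delta)^3)$. Combined, these yield that all convergence/rank-robustness statements transfer and that $\X_t$ is rank-one beyond $T_1$.

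For part (i), the key observation is that whenever the test $\lambda_1 \geq 1+\lambda_2$ passes, the update $\X_{t+1} \gets \u_1\u_1^\top$ is \emph{exactly} the Euclidean projection $\Pi_{\mS_n}(\Y_{t+1})$ onto the spectrahedron. Indeed, projecting onto $\mS_n$ amounts to projecting the spectrum of $\Y_{t+1}$ onto the unit simplex, and the gap condition $\lambda_1 \geq 1+\lambda_2$ is precisely the regime in which this simplex projection concentrates all mass on the top coordinate. Consequently, the $\beta$-smooth descent lemma gives
\begin{align*}
f(\X_{t+1}) \;\leq\; \min_{\X\in\mS_n}\Big[f(\X_t)+\langle\nabla f(\X_t),\X-\X_t\rangle+\tfrac{\beta}{2}\|\X-\X_t\|_F^2\Big],
\end{align*}
which is at most the corresponding Option~2 Frank-Wolfe upper bound (that minimum is taken over the narrower segment $\{\X_t+\eta(\v_t\v_t^\top-\X_t):\eta\in[0,1]\}\subset \mS_n$). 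Hence the single per-iteration inequality driving the proof of Theorem~\ref{thm:rankoneFW} holds for both branches of the algorithm, and all of its guarantees transfer verbatim; the same reasoning applies under Assumption~\ref{ass:funcStruct} to transfer Theorem~\ref{thm:rankoneFWimprov}.

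For part (ii), define $\Y^* := \X^*-\beta^{-1}\nabla f(\X^*)$. By Lemma~\ref{lem:optSolStruct}, $\X^*=\x^*\x^{*\top}$ where $\x^*$ is the bottom eigenvector of $\nabla f(\X^*)$. Expanding $\Y^*$ in the eigenbasis $\u_1,\dots,\u_n$ of $\nabla f(\X^*)$ shows that its top eigenvalue equals $1-\mu_n/\beta$ (with eigenvector $\x^*$), its second-largest equals $-\mu_{n-1}/\beta$, and the gap between them is exactly $1+\delta/\beta$, which strictly exceeds $1$. For the iterates, smoothness together with the triangle inequality gives
\begin{align*}
\|\Y_{t+1}-\Y^*\|_F \;\leq\; \|\X_t-\X^*\|_F+\tfrac{1}{\beta}\|\nabla f(\X_t)-\nabla f(\X^*)\|_F \;\leq\; 2\|\X_t-\X^*\|_F,
\end{align*}
and combining Lemma~\ref{lem:QG} with the $h_t=O(\beta/t)$ rate from part~(i) produces $\|\Y_{t+1}-\Y^*\|_F = O(\sqrt{\beta/(\delta t)})$. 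Weyl's inequality then forces the top-to-second gap of $\Y_{t+1}$ to exceed $1+\delta/(2\beta)>1$ as soon as this Frobenius perturbation falls below $\delta/(4\beta)$, which first occurs at $t = \Omega(\beta^3/\delta^3)$. The condition persists for all larger $t$ because $h_t$ is non-increasing in both branches (descent holds for PG, and line-search guarantees descent for FW), so $\X_t$ remains rank-one thereafter. Under Assumption~\ref{ass:funcStruct}, substituting \eqref{eq:improvGradDist:2} sharpens the threshold to $T_1 = O(\|\mA\|^3\beta_g^3/(\alpha_g\delta^2))$ in the same fashion.

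The main obstacle is the first step: one must be careful to verify that the quadratic upper-bound inequality that is the real engine of Theorem~\ref{thm:rankoneFW}'s proof genuinely admits a PG-style relaxation (rather than insisting the iterate sit on the FW chord). Once this bridge is in place, the remainder is a perturbation-of-eigenvalues computation anchored at $\Y^*$ and powered by the quadratic growth of Lemma~\ref{lem:QG}.
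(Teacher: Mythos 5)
Your proposal is correct and follows the same two-part decomposition as the paper: first show that a projected-gradient step is never worse, in the per-iteration quadratic-model sense, than the Frank--Wolfe line-search step (so all guarantees of Theorems \ref{thm:rankoneFW} and \ref{thm:rankoneFWimprov} transfer), and then use Lemma \ref{lem:QG} together with the $O(\beta/t)$ rate to show that after $T_1=O((\beta/\delta)^3)$ iterations the projection branch is triggered, yielding rank-one iterates. Part (i) of your argument is essentially identical to the paper's (the paper derives the same inequality $f(\X_{t+1})-f^*\leq f(\X_t)-f^*+\langle{\Y-\X_t,\nabla{}f(\X_t)}\rangle+\frac{\beta}{2}\Vert{\Y-\X_t}\Vert_F^2$ for all $\Y\in\mS_n$ via the projection's optimality, then specializes $\Y$ to the FW chord). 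Where you genuinely differ is part (ii): the paper simply invokes Theorem 7 of \cite{Garber19} to conclude that $\Vert{\X_t-\X^*}\Vert_F\leq \delta/(4\beta)$ implies the projection of $\Y_{t+1}$ is rank-one, whereas you reprove this from scratch by computing the spectrum of $\Y^*=\X^*-\beta^{-1}\nabla{}f(\X^*)$ (top gap $1+\delta/\beta$), bounding $\Vert{\Y_{t+1}-\Y^*}\Vert_F\leq 2\Vert{\X_t-\X^*}\Vert_F$ by smoothness, and applying Weyl; this makes the proof self-contained at the cost of a few extra lines, and your persistence argument via monotonicity of $h_t$ is sound.

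One caveat: your closing claim that under Assumption \ref{ass:funcStruct} the rank-one threshold sharpens to $T_1=O\left({\Vert{\mA}\Vert^3\beta_g^3/(\alpha_g\delta^2)}\right)$ does not follow from your own argument. The structural assumption improves only the bound on $\Vert{\nabla{}f(\X_t)-\nabla{}f(\X^*)}\Vert_F$ in terms of $h_t$, but your triangle inequality for $\Vert{\Y_{t+1}-\Y^*}\Vert_F$ also contains the term $\Vert{\X_t-\X^*}\Vert_F$, which is controlled only through Lemma \ref{lem:QG} as $O(\sqrt{h_t/\delta})$ and therefore still requires $t=\Omega(\beta^3/\delta^3)$ to fall below the needed $O(\delta/\beta)$ level. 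This is consistent with the paper, whose Theorem \ref{thm:boundedRankAlg} (and Table \ref{table:results}) keeps $T_1=O((\beta/\delta)^3)$ even in the structured case; since the theorem does not assert the sharpened threshold, this is an unsupported aside rather than a gap in the proof of the stated result, but you should drop or justify it separately.
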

\begin{proof}
Note that according to the structure of the Euclidean projection over $\mS_n$ (see for instance Lemma 6 in \cite{Garber19}), when the condition in the if statement (line 6 of the algorithm) holds on some iteration $t$, then indeed the projection of $\Y_{t+1}$ onto $\mS_n$ is given by the rank-one matrix $\u_1\u_1^{\top}$, and thus in this case $\X_{t+1}$ is equivalent to the standard projected gradient update step: $\X_{t+1} \gets \Pi_{\mS_n}[\X_t-\beta^{-1}\nabla{}f(\X_t)]$. Thus, Algorithm \ref{alg:fwpg} either applies a standard projected gradient update (when the projection is rank-one), or otherwise a Frank-Wolfe update with line-search.

In particular, if $\X_{t+1} \gets \Pi_{\mS_n}[\X_t-\beta^{-1}\nabla{}f(\X_t)]$ then, as it is well known, we have that for any $\Y\in\mS_n$,
\begin{align*}
f(\X_{t+1})  &\leq f(\X_t) + \langle{\X_{t+1}-\X_t,\nabla{}f(\X_t)}\rangle + \frac{\beta}{2}\Vert{\X_{t+1}-\X_t}\Vert_F^2 \\
&= f(\X_t)+ \frac{\beta}{2}\Vert{\X_{t+1}-(\X_t-\beta^{-1}\nabla{}f(\X_t))}\Vert_F^2 - \frac{1}{2\beta}\Vert{\nabla{}f(\X_t)}\Vert_F^2 \\
&\leq f(\X_t) + \frac{\beta}{2}\Vert{\Y-(\X_t-\beta^{-1}\nabla{}f(\X_t))}\Vert_F^2 - \frac{1}{2\beta}\Vert{\nabla{}f(\X_t)}\Vert_F^2 \\
&= f(\X_t)  + \langle{\Y-\X_t,\nabla{}f(\X_t)}\rangle + \frac{\beta}{2}\Vert{\Y-\X_t}\Vert_F^2.
\end{align*}

In particular, for any $\eta_t\in[0,1]$, setting $\Y = (1-\eta_t)\X_t + \eta_t\v_t\v_t^{\top}$, with $\v_t$ being the leading eigenvector of $-\nabla{}f_t(\X_t)$, we obtain
\begin{align*}
f(\X_{t+1}) &\leq f(\X_t)  + \eta_t\langle{\v_t\v_t^{\top}-\X_t,\nabla{}f(\X_t)}\rangle + \frac{\eta_t^2\beta}{2}\Vert{\X_t-\v_t\v_t^{\top}}\Vert_F^2
\end{align*}
(which is the same as Eq. \eqref{eq:main:LS} in the proof of Theorem \ref{thm:rankoneFW}).

Thus, a projected gradient update enjoys a per-iteration worst-case error reduction that is no worse than that of a Frank-Wolfe step with line-search (option 2 in Algorithm \ref{alg:fw}). This implies that both the $O(1/t)$ convergence rate and the linear convergence rates for the sequence $\{\X_t\}_{t\geq 1}$ guaranteed in Theorems \ref{thm:rankoneFW} and \ref{thm:rankoneFWimprov}, also hold for Algorithm \ref{alg:fwpg}.

In particular, using the quadratic growth property (Eq. \eqref{eq:genQG:res2}) together with the $O(\beta/t)$ convergence rate (Eq. \eqref{eq:main:res1}), we have that there exists $T_1 = O((\beta/\delta)^3)$ such that for all $t\geq T_1$, 
\begin{align}\label{eq:boundedRank:1}
\Vert{\X_t - \X^*}\Vert_F^2  = O\left({\frac{\beta}{\delta{}T_1}}\right) \leq  \left({\frac{\delta}{4\beta}}\right)^2.
\end{align}

Thus, starting from iteration $T_1$ and onwards, all iterates of the algorithm lie inside the Euclidean ball of radius $\delta/(4\beta)$ around $\X^*$. Invoking Theorem 7 in \cite{Garber19}, it is guaranteed that once the iterates are inside this ball, the projection of $\Y_{t+1}$ onto $\mS_n$ is indeed rank-one, or equivalently, the condition on the eigenvalues in line 6 of the algorithm, always holds.
\end{proof}

\subsection{No burn-in phase when gap is known}
Another disadvantage of Theorems \ref{thm:rankoneFW}, \ref{thm:rankoneFWimprov} is that the linear convergence applies only after a certain ``burn-in" phase. Here we show that if an estimate for the eigen-gap $\delta = \lambda_{n-1}(\nabla{}f(\X^*)) - \lambda_n(\nabla{}f(\X^*))$ is available, then it is possible to modify the Frank-Wolfe method, without essentially changing the complexity of each iteration, so that it enjoys a global linear convergence rate. This modification and convergence analysis follows in an almost straight-forward manner from the work \cite{allen2017linear}, when combined with the quadratic growth property (Lemma \ref{lem:genQG}).

\begin{algorithm}
\caption{Regularized Frank-Wolfe for Problem \eqref{eq:optProb}}
\label{alg:regfw}
\begin{algorithmic}[1]
\STATE input: smoothness parameter $\beta$, gap estimate $\hat{\delta} \in (0, \lambda_{n-1}(\nabla{}f(\X^*)) - \lambda_n(\nabla{}f(\X^*))]$ 
\STATE let $\X_1$ be an arbitrary point in $\mS_n$
\STATE $\eta \gets \min\{1, \frac{\hat{\delta}}{2\beta}\}$
\FOR{$t=1\dots$}
\STATE $\v_t \gets \arg\min_{\Vert{\v}\Vert=1}\langle{\v\v^{\top},\nabla{}f(\X_t)}\rangle + \frac{\eta\beta}{2}\Vert{\v\v^{\top}-\X_t}\Vert_F^2$ \COMMENT{note this is equivalent to $\v_t \gets \EV\left({-\nabla{}f(\X_t)+\eta\beta\X_t}\right)$}
\STATE $\X_{t+1} \gets (1-\eta)\X_t+\eta\v_t\v_t^{\top}$
\ENDFOR
\end{algorithmic}
\end{algorithm}

\begin{theorem}\label{thm:regFW}
Under Assumption \ref{ass:gap}, the iterates of Algorithm \ref{alg:regfw} satisfy
\begin{align*}
\forall t\geq 1: \quad f(\X_{t+1}) -f^*  \leq \left({1 - \min\{\frac{1}{2},\frac{\hat{\delta}}{4\beta}\}}\right)\left({f(\X_t) - f^*}\right).
\end{align*}
\end{theorem}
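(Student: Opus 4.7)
The plan is to combine the $\beta$-smoothness upper bound on $f(\X_{t+1})$ with the optimality of $\v_t$ for the regularized subproblem, and then exploit the rank-one structure of $\X^*$ (guaranteed by Lemma \ref{lem:optSolStruct}) together with the quadratic growth bound from Lemma \ref{lem:QG}.

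First, since $\X_{t+1} - \X_t = \eta(\v_t\v_t^\top - \X_t)$, the standard smoothness bound gives
\[
f(\X_{t+1}) \leq f(\X_t) + \eta\langle \v_t\v_t^\top - \X_t, \nabla f(\X_t)\rangle + \frac{\eta^2\beta}{2}\|\v_t\v_t^\top - \X_t\|_F^2.
\]
The right-hand side equals $f(\X_t) + \eta\bigl[\langle \v_t\v_t^\top,\nabla f(\X_t)\rangle + \tfrac{\eta\beta}{2}\|\v_t\v_t^\top - \X_t\|_F^2\bigr] - \eta\langle\X_t,\nabla f(\X_t)\rangle$, and the bracketed expression is precisely the quantity that the algorithm minimizes over unit vectors. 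Hence for any unit vector $\v$,
\[
f(\X_{t+1}) \leq f(\X_t) + \eta\langle \v\v^\top - \X_t, \nabla f(\X_t)\rangle + \frac{\eta^2\beta}{2}\|\v\v^\top - \X_t\|_F^2.
\]

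Next, I would choose $\v = \x^*$, where $\X^* = \x^*\x^{*\top}$ as given by Lemma \ref{lem:optSolStruct}. After subtracting $f^*$ from both sides and applying convexity to the linear term (giving $\langle \X^* - \X_t,\nabla f(\X_t)\rangle \leq f^* - f(\X_t) = -h_t$),
\[
h_{t+1} \leq (1-\eta)h_t + \frac{\eta^2\beta}{2}\|\X^* - \X_t\|_F^2.
\]
Then Lemma \ref{lem:QG} gives $\|\X_t - \X^*\|_F^2 \leq \frac{2}{\delta}h_t \leq \frac{2}{\hat\delta}h_t$ since $\hat\delta \leq \delta$, yielding $h_{t+1} \leq \bigl(1 - \eta + \eta^2\beta/\hat\delta\bigr)h_t$.

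Finally, substituting $\eta = \hat\delta/(2\beta)$ makes the contraction factor collapse to $1 - \hat\delta/(2\beta) + \hat\delta/(4\beta) = 1 - \hat\delta/(4\beta)$, which is the claimed rate. There is essentially no hard step here; the one conceptual point to keep in mind is that the regularized subproblem is exactly what is needed so that the minimizer $\v_t$ can be swapped for the rank-one factor $\x^*$ of $\X^*$, and this swap is only legitimate because Assumption \ref{ass:gap} (via Lemma \ref{lem:optSolStruct}) guarantees that $\X^*$ is itself rank-one and thus a feasible point of the subproblem's feasible set $\{\v\v^\top : \|\v\|=1\}$.
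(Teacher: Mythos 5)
Your proof is correct and follows essentially the same route as the paper's: smoothness, swapping $\v_t\v_t^{\top}$ for the rank-one $\X^*$ via the optimality of the regularized subproblem (where you rightly flag that Assumption \ref{ass:gap} is what makes $\X^*$ feasible for that swap), convexity for the linear term, Lemma \ref{lem:QG} for the quadratic term, and the choice $\eta = \hat{\delta}/(2\beta)$. The only cosmetic difference is that you invoke $\hat{\delta}\leq\delta$ inside the quadratic-growth bound while the paper applies it at the final step; the computation is identical.
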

As discussed, the proof is a simple application of the arguments used in \cite{allen2017linear} and Lemma \ref{lem:genQG}, however since it is very short, we include it here for completeness.
\begin{proof}
On any iteration $t$ it holds that
\begin{align}\label{eq:thm:regFW:0}
f(\X_{t+1}) - f^* &\underset{(a)}{\leq} f(\X_t) -f^* + \eta\langle{\v_t\v_t^{\top}-\X_t,\nabla{}f(\X_t)}\rangle + \frac{\eta^2\beta}{2}\Vert{\v_t\v_t^{\top}-\X_t}\Vert_F^2 \nonumber \\
&\underset{(b)}{\leq} f(\X_t) -f^* + \eta\langle{\X^*-\X_t,\nabla{}f(\X_t)}\rangle + \frac{\eta^2\beta}{2}\Vert{\X^*-\X_t}\Vert_F^2 \nonumber\\
&\underset{(c)}{\leq} \left({f(\X_t) -f^*}\right)\left({1 - \eta + \frac{\eta^2\beta}{\delta}}\right) \nonumber \\
&\underset{(d)}{\leq} \left({f(\X_t) -f^*}\right)\left({1 -\eta + \frac{\eta^2\beta}{\hat{\delta}}}\right),
\end{align}
where (a) follows from smoothness of $f$, (b) follows from the optimal choice of $\v_t$ and since, under Assumption \ref{ass:gap}, $\X^*$ is rank-one, (c) follows from convexity of $f(\cdot)$ and  Eq. \eqref{eq:genQG:res2} in Lemma \ref{lem:genQG}, and (d) follows since $\hat{\delta} \leq \delta$.

Now, if $\frac{\hat{\delta}}{2\beta}\leq 1$, then plugging-in $\eta = \frac{\hat{\delta}}{2\beta}$ into the RHS of \eqref{eq:thm:regFW:0}, we have that
\begin{align*}
f(\X_{t+1})-f^* \leq \left({f(\X_t) - f^*}\right)\left({1-\frac{\hat{\delta}}{4\beta}}\right).
\end{align*}
Otherwise, we have that $\eta =1$ and $\frac{\beta}{\hat{\delta}} < \frac{1}{2}$. In this case, plugging-in $\eta=1$ into the RHS of \eqref{eq:thm:regFW:0}, we have that
\begin{align*}
f(\X_{t+1})-f^* \leq \frac{1}{2}\left({f(\X_t) - f^*}\right).
\end{align*}
Combining these two cases yields the theorem.
\end{proof}

\paragraph{Remark:} It is possible to combine the use of the projected gradient method, as applied in Algorithm \ref{alg:fwpg}, and the regularized Frank-Wolfe update, as applied in Algorithm \ref{alg:regfw}, to obtain an algorithm that has both bounded rank and global linear convergence rate. This derivation is quite straightforward given these two ingredients and we omit it.

\section{Extension Motivated by Robust-PCA}\label{sec:RPCA}
We now consider the following extension of Problem \eqref{eq:optProb}.

\begin{eqnarray}\label{eq:optProbMix}
\min_{\X\in\mS_n,\y\in\mK}\{f(\X,\y):=g(\mA\X+\y) + \langle{\C,\X}\rangle + \langle{\c,\y}\rangle\},
\end{eqnarray}
where $g:\reals^p\rightarrow\reals$ is assumed $\alpha_g$-strongly convex and $\beta_g$-smooth, $\mA:\mbS^n\rightarrow\reals^p$ is a linear map, $\mK\subset\reals^p$ is assumed convex and compact, and $\C\in\mbS^n,\c\in\reals^p$. Throughout this section we use $D_{\mK}$ to denote the Euclidean diameter of $\mK$.

For instance, the Robust-PCA problem \cite{Candes11,mu2016scalable,Garber18a}:
\begin{eqnarray*}
\min_{\X\in\reals^{m\times n}:\Vert{\X}\Vert_* \leq \tau,~\Y\in\reals^{m\times n}:\Vert{\Y}\Vert_1 \leq k}\frac{1}{2}\Vert{\X+\Y-\M}\Vert_F^2,
\end{eqnarray*}
where $\M\in\reals^{m\times n}$ is some input matrix, and $\Vert{\cdot}\Vert_1$ is the standard entry-wise $\ell_1$ norm, could be formulated as Problem \eqref{eq:optProbMix} 
 via standard transformations (see for instance \cite{Jaggi10}).

Another relevant example is that of phase retrieval with corrupted measurements, in which case the vector $\y$ accounts for the corruptions, and $\mK$ can be taken to be some norm-induced ball (e.g., $\ell_1$ ball in case of sparse corruptions).

In the sequel, we let $\nabla_{\X}f(\X,\Y)$ denote the derivative of $f$ w.r.t. the block $\X$ and $\nabla{}f_{\y}f(\X,\Y)$ the derivative w.r.t. $\y$. Also, as before, we denote $\Vert{\mA}\Vert = \max_{\x\in\reals^p,\Vert{\x}\Vert_2=1}\Vert{\mA^{\top}\x}\Vert_F$.

Towards extending our results for Problem \eqref{eq:optProb} to Problem \eqref{eq:optProbMix}, we consider a standard first-order method which combines the use of Frank-Wolfe with line-search in order to update the matrix variable $\X$ (as done for Problem \eqref{eq:optProb}) with the standard projected gradient method for updating the variable $\y$ \footnote{Here we make an implicit assumption that it is computationally efficient to compute Euclidean projections onto the set $\mK$.}. See Algorithm \ref{alg:mixed}. 
\begin{algorithm}[H]
\caption{Projected Gradient combined with Frank-Wolfe for Problem \eqref{eq:optProbMix}}
\label{alg:mixed}
\begin{algorithmic}[1]
\STATE input: smoothness parameter $\beta_g$
\STATE $(\X_1,\y_1)\gets$ arbitrary point in $\mS_n\times\mK$
\FOR{$t=1\dots$}
\STATE $\y_{t+1} \gets \Pi_{\mK}[\y_t-\frac{1}{2\beta_g}\nabla_{\y}f(\X_t,\y_t)]$
\STATE $\v_t \gets \EV(-\nabla_{\X}f(\X_t,\y_t))$
\STATE $\eta_t \gets \arg\min_{\eta\in[0,1]}f((1-\eta)\X_t+\eta\v_t\v_t^{\top},\y_{t+1})$
\STATE $\X_{t+1} \gets (1-\eta_t)\X_t+\eta_t\v_t\v_t^{\top}$
\ENDFOR
\end{algorithmic}
\end{algorithm}

Working towards proving an analogue of Theorem \ref{thm:rankoneFW} for Problem \eqref{eq:optProbMix}, we begin by extending  our underlying gap assumption to the new setting.

\begin{lemma}\label{lem:constGrad}
Let $\mW^*\subset\mS_n\times\mK$ denote the set of optimal solutions to Problem \eqref{eq:optProbMix}. Then $\nabla{}f(\X,\y)$ is constant over $\mW^*$.
\end{lemma}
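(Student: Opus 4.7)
The plan is to exploit two structural features of the objective in Problem \eqref{eq:optProbMix}: first, that the gradient depends on $(\X,\y)$ only through the single quantity $\mA\X+\y$, and second, that $g$ is strongly convex in its argument. Concretely, using the chain rule one has
\begin{align*}
\nabla_{\X}f(\X,\y) = \mA^{\top}\nabla{}g(\mA\X+\y)+\C, \qquad \nabla_{\y}f(\X,\y) = \nabla{}g(\mA\X+\y)+\c,
\end{align*}
so the whole gradient is determined by the value of $\mA\X+\y$ together with the fixed quantities $\C,\c$. Hence it will suffice to show that $\mA\X+\y$ takes the same value at every point of $\mW^*$.

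To establish this, I would fix two optimal pairs $(\X_1,\y_1),(\X_2,\y_2)\in\mW^*$ and consider the midpoint $(\bar{\X},\bar{\y}):=\tfrac{1}{2}(\X_1+\X_2,\y_1+\y_2)$, which lies in $\mS_n\times\mK$ by convexity of the feasible set and is therefore feasible. Since the linear parts $\langle{\C,\X}\rangle$ and $\langle{\c,\y}\rangle$ are affine in $(\X,\y)$, their value at the midpoint equals the average of their values at $(\X_1,\y_1)$ and $(\X_2,\y_2)$. Consequently, the assumption $f(\bar{\X},\bar{\y})\geq f^*= f(\X_1,\y_1)=f(\X_2,\y_2)$ reduces to
\begin{align*}
g\!\left(\tfrac{1}{2}(\mA\X_1+\y_1)+\tfrac{1}{2}(\mA\X_2+\y_2)\right) \geq \tfrac{1}{2}g(\mA\X_1+\y_1)+\tfrac{1}{2}g(\mA\X_2+\y_2).
\end{align*}
Combined with plain convexity of $g$ (which gives the reverse inequality), this forces equality in Jensen's inequality for $g$ at these two points.

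The key step is then to invoke $\alpha_g$-strong convexity of $g$: equality in Jensen's inequality for a strongly convex function implies that the two arguments coincide, i.e.\ $\mA\X_1+\y_1=\mA\X_2+\y_2$. Thus $\mA\X+\y$ is constant over $\mW^*$, and substituting into the gradient formulas above yields that $\nabla_{\X}f$ and $\nabla_{\y}f$ are each constant over $\mW^*$, which is the claim. I do not expect any real obstacle here; the only subtlety is making sure one uses strong convexity (not just convexity) of $g$ at the Jensen step, which is exactly what rules out the possibility of two optima mapping to different values of $\mA\X+\y$.
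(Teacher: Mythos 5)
Your proposal is correct and follows essentially the same route as the paper: the paper's proof simply asserts that strong convexity of $g$ forces $\mA\X+\y$ to be constant over the optimal set and then reads off the constancy of the gradient from the chain-rule formulas, which is exactly your argument with the strong-convexity/Jensen step spelled out explicitly. No gap here; your midpoint argument is just the detailed justification of the step the paper takes for granted.
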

\begin{proof}
Since $g$ is strongly convex it follows that $\mA\X+\y$ is constant over $\mW^*$. Note that for any $\X,\y$, $\nabla_{\X}f(\X,\y) = \mA^{\top}\nabla{}g(\mA\X+\y)+\C$, $\nabla_{\y}f(\X,\y) = \nabla{}g(\mA\X+\y)+\c$. Hence, it follows that indeed $\nabla{}f$ is constant over $\mW^*$.
\end{proof}


\begin{assumption}\label{ass:gapMix}
The gradient vector at every optimal solution $(\X^*,\y^*)$ satisfies: $\lambda_{n-1}(\nabla_{\X}f(\X^*,\y^*))-\lambda_n(\nabla_{\X}f(\X^*,\y^*)) = \delta > 0$.\footnote{Recall that according to the previous lemma the gradient vector is constant over the set of optimal solutions and thus, this is equivalent to assuming the eigen-gap holds for some optimal solution.}
\end{assumption}

\begin{lemma}\label{lem:optSolStructMix}
Under Assumption \ref{ass:gapMix} there exists a unique optimal solution $(\X^*,\y^*)$ to Problem \eqref{eq:optProbMix}. Moreover, $\X^*$ is rank-one, that is $\X^*=\x^*\x^{*\top}$ for some unit vector $\x^*\in\reals^n$.
\end{lemma}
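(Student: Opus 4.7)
The plan is to reduce this mixed-variable uniqueness statement to Lemma \ref{lem:optSolStruct} by exploiting the Cartesian product structure of the constraint set $\mS_n \times \mK$ together with the strong convexity of $g$. The preceding lemma has already done most of the heavy lifting by establishing that $\nabla f$ takes a single value $\nabla f^* = (\nabla_{\X}f^*, \nabla_{\y}f^*)$ over the set $\mW^*$ of optimal solutions.

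First, I would exploit the separability of the feasible set. Since $\mS_n \times \mK$ is a product and the gradient at every optimal point equals the common vector $\nabla f^*$, the first-order optimality condition for \eqref{eq:optProbMix} decouples: every optimal pair $(\X^*,\y^*) \in \mW^*$ must satisfy, in particular,
\begin{align*}
\X^* \in \arg\min_{\X\in\mS_n} \langle{\X,\nabla_{\X}f^*}\rangle.
\end{align*}
Under Assumption \ref{ass:gapMix}, the eigenvector $\x^*$ associated with the smallest eigenvalue $\lambda_n(\nabla_{\X}f^*)$ is unique (up to sign), and repeating verbatim the linear-algebraic computation in the proof of Lemma \ref{lem:optSolStruct} shows that the linear problem over the spectrahedron has the unique minimizer $\x^*\x^{*\top}$. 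Thus $\X^*$ is forced to be rank-one and is uniquely determined across all optimal pairs.

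Second, I would argue uniqueness of $\y^*$. Since $g$ is $\alpha_g$-strongly convex, the quantity $\mA\X+\y$ is constant over $\mW^*$: if $(\X_1,\y_1),(\X_2,\y_2) \in \mW^*$ then their midpoint is optimal too, yet the linear terms $\langle{\C,\X}\rangle + \langle{\c,\y}\rangle$ are affine, so optimality of the midpoint combined with strict convexity of $g$ forces $\mA\X_1+\y_1 = \mA\X_2+\y_2$. Having already shown that $\X^*$ is uniquely determined, the identity $\y^* = (\mA\X^*+\y^*) - \mA\X^*$ immediately yields that $\y^*$ is also unique. This gives the desired uniqueness of $(\X^*,\y^*)$.

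There is no real obstacle here: the eigen-gap argument for the $\X$-block is literally a quotation of Lemma \ref{lem:optSolStruct}, and the $\y$-block uniqueness follows from the constancy-of-$\mA\X+\y$ fact already used (implicitly) in the proof of the previous lemma. The only mild care needed is to notice that product structure of the constraint set is precisely what allows the first-order optimality condition to split into an independent condition on each block, so that the eigen-gap argument can be applied to the $\X$-block in isolation.
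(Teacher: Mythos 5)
Your proof is correct and follows essentially the same route as the paper: the paper fixes $\y^*$ and applies Lemma \ref{lem:optSolStruct} to the restricted function $q(\X)=f(\X,\y^*)$ (using the constancy of the gradient over the optimal set to handle other optimal pairs), while you decouple the first-order optimality condition over the product set $\mS_n\times\mK$ and run the eigen-gap argument on the linear subproblem directly, which is only a cosmetic difference. The second half — constancy of $\mA\X+\y$ over the optimal set via strong convexity of $g$, hence uniqueness of $\y^*$ once $\X^*$ is pinned down — is identical to the paper's argument.
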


\begin{proof}
Fix some optimal solution $(\X^*,\y^*)$ and consider the function $q(\X) = f(\X,\y^*)$. Clearly $\nabla{}q(\X) = \nabla_{\X}f(\X,\y^*)$ and $\X^*\in\arg\min_{\X\in\mS_n}q(\X)$. Thus, according to Assumption \ref{ass:gapMix} it follows that $\lambda_{n-1}(\nabla{}q(\X^*)) - \lambda_n(\nabla{}q(\X^*)) = \delta >0$. Thus, by Lemma \ref{lem:optSolStruct} it follows that $\X^*$ is the unique minimizer of $q(\X)$ over $\mS_n$, and moreover, $\X^*=\x^*\x^{*\top}$ is rank-one, where $\x^*$ is the eigenvector which corresponds to the eigenvalue $\lambda_n(\nabla{}q(\X^*))=\lambda_n(\nabla_{\X}f(\X^*))$. However, by Lemma \ref{lem:constGrad}, the gradient vector of $f(\cdot,\cdot)$ is constant over the optimal set, and hence, if there exists another optimal solution $(\X_2^*,\y_2^*)$ to Problem \eqref{eq:optProbMix}, by the above reasoning it must hold that $\X^*_2 = \X^*=\x^*\x^{*\top}$.

Now, since $g(\cdot)$ is strongly convex it follows that the vector $\mA\X+\y$ is constant over the optimal set $\mW^*\subseteq\mS_n\times\mK$. Thus, for any two optimal solutions $(\X^*,\y_1^*),(\X^*,\y_2^*)$ we have that
\begin{eqnarray*}
\y_1^*-\y_2^* = (\mA\X^*+\y_1^*) - (\mA\X^*+\y_2^*) = \mathbf{0}.
\end{eqnarray*}
Hence, the lemma follows.
\end{proof}

The proof of the following lemma follows essentially from the same arguments used to derive Eq. \eqref{eq:improvGradDist:1} and thus we omit it.
\begin{lemma}\label{lem:gradDistMix}
For any $(\X,\y)\in\mS_n\times\mK$ and optimal solution $(\X^*,\y^*)\in\mS_n\times\mK$ it holds that
\begin{eqnarray}\label{eq:gradDist}
\Vert{\nabla_{\X}f(\X,\y) - \nabla_{\X}f(\X^*,\y^*)}\Vert_F \leq  \frac{\sqrt{2}\beta_g\Vert{\mA}\Vert}{\sqrt{\alpha_g}}\sqrt{f(\X,\y)-f^*}.
\end{eqnarray}
\end{lemma}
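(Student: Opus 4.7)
The plan is to reduce the gradient-difference bound to a bound on the distance between the ``effective inputs'' $\mA\X+\y$ and $\mA\X^*+\y^*$ of the strongly convex outer function $g$, and then to use strong convexity together with the first-order optimality at $(\X^*,\y^*)$ to control that distance by $f(\X,\y)-f^*$.

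First I would expand
\[
\nabla_{\X}f(\X,\y)-\nabla_{\X}f^* \;=\; \mA^{\top}\bigl(\nabla g(\mA\X+\y)-\nabla g(\mA\X^*+\y^*)\bigr),
\]
using that the $\C$-term is linear in $\X$ and hence cancels. Applying the operator-norm bound $\Vert\mA^{\top}\u\Vert_F\leq\Vert\mA\Vert\,\Vert\u\Vert_2$ and the $\beta_g$-smoothness of $g$ yields
\[
\Vert\nabla_{\X}f(\X,\y)-\nabla_{\X}f^*\Vert_F \;\leq\; \beta_g\Vert\mA\Vert\,\Vert(\mA\X+\y)-(\mA\X^*+\y^*)\Vert_2.
\]
This is the reason the claim's RHS carries the factor $\beta_g\Vert\mA\Vert$; the remaining work is to show $\Vert(\mA\X+\y)-(\mA\X^*+\y^*)\Vert_2\leq\sqrt{2(f(\X,\y)-f^*)/\alpha_g}$.

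To get the quadratic-growth-type bound on the effective inputs, I would invoke the $\alpha_g$-strong convexity of $g$ around the point $\mA\X^*+\y^*$:
\[
g(\mA\X+\y)-g(\mA\X^*+\y^*) \;\geq\; \langle\nabla g^*,\,\mA(\X-\X^*)+(\y-\y^*)\rangle + \tfrac{\alpha_g}{2}\Vert(\mA\X+\y)-(\mA\X^*+\y^*)\Vert_2^2,
\]
where $\nabla g^*:=\nabla g(\mA\X^*+\y^*)$. Rewriting the linear term as $\langle\mA^{\top}\nabla g^*,\X-\X^*\rangle+\langle\nabla g^*,\y-\y^*\rangle$ and adding $\langle\C,\X-\X^*\rangle+\langle\c,\y-\y^*\rangle$ to both sides reconstitutes $f(\X,\y)-f^*$ on the left and $\langle\nabla_{\X}f^*,\X-\X^*\rangle+\langle\nabla_{\y}f^*,\y-\y^*\rangle+\tfrac{\alpha_g}{2}\Vert\cdot\Vert^2$ on the right. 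By the first-order optimality of $(\X^*,\y^*)$ over the convex set $\mS_n\times\mK$, the two inner-product terms are nonnegative, which gives
\[
f(\X,\y)-f^* \;\geq\; \tfrac{\alpha_g}{2}\Vert(\mA\X+\y)-(\mA\X^*+\y^*)\Vert_2^2.
\]
Substituting this into the smoothness bound above yields the claimed inequality.

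The main obstacle, such as it is, is the optimality-condition step: one must be careful that $\nabla_{\X}f^*$ and $\nabla_{\y}f^*$ are indeed well-defined constants on the optimal set (justified already by the preceding lemma showing $\nabla f$ is constant on $\mW^*$) so that the inner-product terms drop out for every $(\X,\y)\in\mS_n\times\mK$, not just for a particular optimizer. Once that is in hand the rest is a direct chain of strong convexity, smoothness, and the operator-norm inequality.
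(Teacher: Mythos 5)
Your proof is correct and follows the same two-step skeleton as the paper's: first the operator-norm/smoothness bound $\Vert{\nabla_{\X}f(\X,\y)-\nabla_{\X}f^*}\Vert_F\leq\beta_g\Vert{\mA}\Vert\,\Vert{\z-\z^*}\Vert_2$ with $\z=\mA\X+\y$, $\z^*=\mA\X^*+\y^*$, and then a strong-convexity bound on $\Vert{\z-\z^*}\Vert_2$ in terms of $f(\X,\y)-f^*$. Where you genuinely add value is in the second step: the paper simply writes $\Vert{\z-\z^*}\Vert_2\leq\sqrt{2(g(\z)-g(\z^*))/\alpha_g}=\sqrt{2(f(\X,\y)-f^*)/\alpha_g}$, which as stated quietly drops both the first-order term $\langle{\nabla{}g(\z^*),\z-\z^*}\rangle$ in the strong-convexity inequality and the linear terms $\langle{\C,\X-\X^*}\rangle+\langle{\c,\y-\y^*}\rangle$ that distinguish $g(\z)-g(\z^*)$ from $f(\X,\y)-f^*$; taken literally this shortcut is only valid when the linear part vanishes (as in the robust-PCA instantiation with $\C=\mathbf{0},\c=\mathbf{0}$). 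Your invocation of the first-order optimality condition of $(\X^*,\y^*)$ over the convex set $\mS_n\times\mK$, i.e., $\langle{\nabla_{\X}f^*,\X-\X^*}\rangle+\langle{\nabla_{\y}f^*,\y-\y^*}\rangle\geq 0$, is exactly the ingredient that absorbs both omissions at once and yields $f(\X,\y)-f^*\geq\tfrac{\alpha_g}{2}\Vert{\z-\z^*}\Vert_2^2$ rigorously for general $\C,\c$. Your final remark that $\nabla{}f$ is constant over $\mW^*$, so that $\nabla_{\X}f^*$ is unambiguous, is precisely the paper's preceding lemma and is the right justification; in short, your argument is the same route, carried out with the care the paper's terse version presupposes.
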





We can now finally present and prove our main result for Problem \eqref{eq:optProbMix}.

\begin{theorem}\label{thm:mainMix}
Let $\{(\X_t,\y_t)\}_{t\geq 1}$ be a sequence produced by Algorithm \ref{alg:mixed} and denote for all $t\geq 1$: $h_t:=f(\X_t,\y_t)-f^*$. Then,
\begin{align}\label{eq:mainMix:res1s}
\forall t\geq 1:\qquad h_t = O\left({\frac{\beta_g(\Vert{\mA}\Vert^2+D_{\mK}^2)}{t}}\right).
\end{align}
Moreover, under Assumption \ref{ass:gapMix}, there exists $T_0 = O\left({\frac{\Vert{\mA}\Vert^2\beta_g^3(\Vert{\mA}\Vert^2+D_{\mK}^2)}{\alpha_g\delta^2}}\right)$ such that $\forall t\geq T_0$:
\begin{align}\label{eq:mainMix:res2s}
h_{t+1} \leq h_t\left({1 - \min\{\frac{1}{6},\frac{\alpha_g\delta^2}{4\beta_g\left({10\alpha_g\delta\Vert{\mA}\Vert^2+4\delta^2+64\Vert{\mA}\Vert^4\beta_g^2}\right)},\frac{\delta}{72\beta_g\Vert{\mA}\Vert^2}\}}\right).
\end{align}

Finally, under Assumption \ref{ass:gapMix}, it also holds that
\begin{align}\label{eq:mainMix:res3s}
\forall t\geq 1: \quad \Vert{\v_t\v_t^{\top}-\X^*}\Vert_F^2 = O\left({\frac{\beta_g^2\Vert{\mA}\Vert^2}{\alpha_g\delta^2}h_t}\right),
\end{align}
where $\v_t$ is the eigenvector computed in line 5 of the algorithm.
\end{theorem}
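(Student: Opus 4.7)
The plan is to extend the three-step proof of Theorem \ref{thm:rankoneFW} to the joint iteration in Algorithm \ref{alg:mixed}, using Lemma \ref{lem:optSolStructMix} and Lemma \ref{lem:gradDistMix} in place of Lemma \ref{lem:optSolStruct} and the elementary Lipschitz bound on the gradient, respectively.

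For the sublinear rate \eqref{eq:mainMix:res1s}, I would view the joint update as a combined Frank-Wolfe-plus-projected-gradient descent on the product set $\mS_n\times\mK$. The effective smoothness of $f$ along $\X$ is $\beta_g\Vert{\mA}\Vert^2$, along $\y$ it is $\beta_g$, and a cross term of order $\beta_g\Vert{\mA}\Vert$ can be absorbed into the joint smoothness constant; the joint diameter is $O(1+D_{\mK})$. Combining the standard projected-gradient descent inequality for the $\y$-step (with step $1/(2\beta_g)$) with the line-search guarantee for the $\X$-step, a straightforward induction along the lines of \cite{Jaggi13b} yields the stated $O(\beta_g(\Vert{\mA}\Vert^2+D_{\mK}^2)/t)$ bound. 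The last assertion \eqref{eq:mainMix:res3s} also falls out quickly: Lemma \ref{lem:optSolStructMix} identifies $\x^*$ as the top eigenvector of $-\nabla_{\X}f^*$, and the Davis-Kahan $\sin\theta$ theorem (as in the derivation of \eqref{eq:main:res3}) bounds $\Vert{\v_t\v_t^{\top}-\X^*}\Vert_F^2$ by $8\Vert{\nabla_{\X}f(\X_t,\y_t)-\nabla_{\X}f^*}\Vert_F^2/\delta^2$, which combined with Lemma \ref{lem:gradDistMix} delivers the claimed $O(\beta_g^2\Vert{\mA}\Vert^2/(\alpha_g\delta^2)\cdot h_t)$ bound.

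The heart of the argument is \eqref{eq:mainMix:res2s}. I would first extend Lemma \ref{lem:QG} to the joint setting: for any $(\X,\y)\in\mS_n\times\mK$, apply convexity at $(\X^*,\y^*)$, use the first-order optimality of $\y^*$ over $\mK$ (which makes the $\y$-contribution nonnegative), and then repeat the spectral computation from Lemma \ref{lem:QG} on the $\X$-term using the eigen-gap of $\nabla_{\X}f^*$ guaranteed by Assumption \ref{ass:gapMix}; this yields $h_t\geq(\delta/2)\Vert{\X_t-\X^*}\Vert_F^2$. Combining this with $\alpha_g$-strong convexity of $g$ (giving $h_t\geq(\alpha_g/2)\Vert{\mA(\X_t-\X^*)+(\y_t-\y^*)}\Vert_2^2$) and the triangle inequality also controls $\Vert{\y_t-\y^*}\Vert_2^2$ by $O(h_t\cdot(\Vert{\mA}\Vert^2/\delta+1/\alpha_g))$. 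Next, Part 1 together with Lemma \ref{lem:gradDistMix} gives, for $T_0$ of the stated order, $\Vert{\nabla_{\X}f(\X_t,\y_t)-\nabla_{\X}f^*}\Vert_F\leq\delta/3$, and Weyl's inequality then preserves an eigen-gap $\lambda_{n-1}(\nabla_{\X}f(\X_t,\y_t))-\lambda_n(\nabla_{\X}f(\X_t,\y_t))\geq\delta/3$. I would then apply Lemma \ref{lem:rankoneFWgap} at $(\X_t,\y_t)$ with $\delta_{\X}=\delta/3$, and combine it with the three-point descent inequality produced by the projected-gradient step on $\y$; the coupling between the two updates introduces a cross term
\[
\langle{\v_t\v_t^{\top}-\X_t,\,\nabla_{\X}f(\X_t,\y_{t+1})-\nabla_{\X}f(\X_t,\y_t)}\rangle,
\]
of magnitude $\beta_g\Vert{\mA}\Vert\Vert{\y_{t+1}-\y_t}\Vert_2\Vert{\v_t\v_t^{\top}-\X_t}\Vert_F$, which Young's inequality splits between the Frank-Wolfe quadratic term $(\eta^2\beta_g\Vert{\mA}\Vert^2/2)\Vert{\v_t\v_t^{\top}-\X_t}\Vert_F^2$ and the slack in the $\y$-descent. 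Optimizing $\eta$ in the two regimes $\delta/(6\beta_g\Vert{\mA}\Vert^2)\leq 1$ and $>1$ (exactly as in the proof of Theorem \ref{thm:rankoneFW}) and invoking the joint quadratic growth from the first step to convert the resulting per-iteration decrease into a contraction on $h_t$ then produces the three-quantity minimum in \eqref{eq:mainMix:res2s}.

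The main obstacle I expect is the careful accounting of that cross term: because $\v_t$ is defined from $\nabla_{\X}f(\X_t,\y_t)$ while the effective $\X$-decrease is measured against $\nabla_{\X}f(\X_t,\y_{t+1})$, the Young-inequality slack must simultaneously be small enough to be absorbed into the Frank-Wolfe quadratic term and not overwhelm the $\y$-descent. Balancing these two requirements, together with the joint quadratic growth, is precisely what forces the intricate second minimand in \eqref{eq:mainMix:res2s} mixing $\alpha_g,\beta_g,\delta$ and $\Vert{\mA}\Vert$, and distinguishes the analysis from the single-block case handled in Theorem \ref{thm:rankoneFW}.
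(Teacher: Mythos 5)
Your treatment of \eqref{eq:mainMix:res1s} and \eqref{eq:mainMix:res3s} matches the paper, and your joint quadratic-growth step is in fact valid and slightly cleaner than what the paper does: since $\y^*$ minimizes $f(\X^*,\cdot)$ over $\mK$, the $\y$-part of the first-order term is nonnegative and the spectral computation of Lemma \ref{lem:QG} gives $h_t\geq\frac{\delta}{2}\Vert{\X_t-\X^*}\Vert_F^2$, hence $\Vert{\y_t-\y^*}\Vert_2^2=O\left({(\Vert{\mA}\Vert^2/\delta+1/\alpha_g)h_t}\right)$; the paper instead controls $\Vert{\y_t-\y^*}\Vert_2$ via Davis--Kahan plus a triangle inequality through $\u_n\u_n^{\top}$. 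Also, the cross term you single out as the main obstacle is not where the difficulty lies: the paper never forms $\nabla_{\X}f(\X_t,\y_{t+1})$, but applies smoothness of $g$ once to the combined increment $\mA(\eta_{\X}(\v_t\v_t^{\top}-\X_t))+(\y_{t+1}-\y_t)$, uses $(a+b)^2\leq 2a^2+2b^2$, and compares $\y_{t+1}$ to the competitor $\y_t+\eta_{\y}(\y^*-\y_t)$ inside the quadratic model.

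The genuine gap is in the last step of your plan for \eqref{eq:mainMix:res2s}: ``invoking the joint quadratic growth to convert the resulting per-iteration decrease into a contraction on $h_t$'' does not work as stated. Quadratic growth bounds distances from \emph{above} by $h_t$; to convert the decrease supplied by Lemma \ref{lem:rankoneFWgap}, which is proportional to $1-\v_t^{\top}\X_t\v_t$ (equivalently to $\Vert{\v_t\v_t^{\top}-\X_t}\Vert_F^2$), into a multiple of $h_t$ you would need a \emph{lower} bound of the form $1-\v_t^{\top}\X_t\v_t\geq c\,h_t$, which is false in general: $\X_t$ can essentially coincide with $\X^*$ while $\y_t$ is still far from $\y^*$, so that quantity vanishes while $h_t>0$. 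In that regime the contraction must come from joint convexity, $\langle{\v_t\v_t^{\top}-\X_t,\nabla_{\X}f}\rangle+\langle{\y^*-\y_t,\nabla_{\y}f}\rangle\leq\langle{\X^*-\X_t,\nabla_{\X}f}\rangle+\langle{\y^*-\y_t,\nabla_{\y}f}\rangle\leq-h_t$ (note the $\y$-term alone has no sign, so the ``halving'' trick of Theorem \ref{thm:rankoneFW} cannot be applied to the $\X$-term only); conversely, when $1-\v_t^{\top}\X_t\v_t$ is large relative to $h_t$, the FW quadratic penalty $\beta_g\Vert{\mA}\Vert^2\eta^2\Vert{\v_t\v_t^{\top}-\X_t}\Vert_F^2$ cannot be absorbed by $-\eta h_t$, and one must rely solely on the Lemma \ref{lem:rankoneFWgap} decrease, effectively taking $\eta_{\y}=0$ in the bound. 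The paper's proof hinges on exactly this explicit dichotomy, splitting on whether $1-\u_n^{\top}\X_t\u_n\leq h_t/\delta$, and the intricate second minimand in \eqref{eq:mainMix:res2s} arises from the constant $C_4$ generated by that case analysis (through the bound on $\Vert{\y^*-\y_t}\Vert_2^2$ and the threshold $C_3=1/\delta$), not from balancing a cross term. Without this case split, or an equivalent mechanism, your argument does not close.
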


\begin{proof}
Fix some iteration $t$. By the optimal choice of $\eta_t$, we have that for any $\eta_{\X}\in[0,1]$ it holds that
\begin{eqnarray*}
f(\X_{t+1},\y_{t+1}) \leq f((1-\eta_{\X})\X_t+\eta_{\X}\v_t\v_t^{\top},\y_{t+1}).
\end{eqnarray*}

We introduce the notation $\z_t  = \mA\X_t+\y_t$. Using the smoothness of $g(\cdot)$, it holds for any $\eta_{\X}\in[0,1]$ that
\begin{align*}
f(\X_{t+1},\y_{t+1}) & \leq g(\mA\X_t+\mA(\eta_{\X}(\v_t\v_t^{\top}-\X_t))+\y_{t} + (\y_{t+1}-\y_t)) \\
&+\langle{\C, \X_t + \eta_{\X}(\v_t\v_t^{\top}-\X)}\rangle + \langle{\c,\y_{t+1}}\rangle\\
&\leq g(\z_t) + \left({\mA(\eta_{\X}(\v_t\v_t^{\top}-\X_t)) + \y_{t+1}-\y_t}\right)^{\top}\nabla{}g(\z_t) \\
&+ \frac{\beta_g}{2}\Vert{\mA(\eta_{\X}(\v_t\v_t^{\top}-\X_t)) + \y_{t+1}-\y_t}\Vert_2^2 \\
&+\langle{\C, \X_t + \eta_{\X}(\v_t\v_t^{\top}-\X)}\rangle + \langle{\c,\y_{t+1}}\rangle\\
&\leq f(\X_t,\y_t) + \eta_{\X}\langle{\v_t\v_t^{\top}-\X_t,\nabla{}f_{\X}(\X_t,\y_t)}\rangle \\
&+ (\y_{t+1}-\y_t)^{\top}\nabla_{\y}f(\X_t,\y_t)\\
& + \beta_g\left({\Vert{\mA}\Vert^2\eta_{\X}^2\Vert{\v_t\v_t^{\top}-\X_t}\Vert_F^2 + \Vert{\y_{t+1}-\y_t}\Vert_2^2}\right),
\end{align*}
where in the last inequality we have used the triangle inequality for the Euclidean norm and $(a+b)^2 \leq 2a^2+2b^2$.

From the choice of $\y_{t+1}$, it follows that for all $\eta_{\y}\in[0,1]$,
\begin{align*}
&(\y_{t+1}-\y_t)^{\top}\nabla_{\y}f(\X_t,\y_t) + \beta_g \Vert{\y_{t+1}-\y_t}\Vert_2^2 \leq \\ 
&((\y_t + \eta_{\y}(\y^*-\y_t))-\y_t)^{\top}\nabla_{\y}f(\X_t,\y_t) + \beta_g \Vert{(\y_t + \eta_{\y}(\y^*-\y_t))-\y_t}\Vert_2^2 =\\
&\eta_{\y}(\y^*-\y_t)^{\top}\nabla_{\y}f(\X_t,\y_t) + \beta_g\eta_{\y}^2 \Vert{\y^*-\y_t}\Vert_2^2.
\end{align*}

Combining both inequalities we have that for any $(\eta_{\X},\eta_{\y})\in[0,1]\times[0,1]$,
\begin{align}\label{eq:mainMix:0}
f(\X_{t+1},\y_{t+1}) &\leq f(\X_t,\y_t) + \eta_{\X}\langle{\v_t\v_t^{\top}-\X_t,\nabla_{\X}f(\X_t,\y_{t})}\rangle \nonumber \\
&+ \eta_{\y}(\y^*-\y_t)^{\top}\nabla_{\y}f(\X_t,\y_t)  \nonumber \\
&+\beta_g\left({\eta_{\X}^2\Vert{\mA}\Vert^2\Vert{\v_t\v_t^{\top}-\X_t}\Vert_F^2 + \eta_{\y}^2\Vert{\y^*-\y_t}\Vert_2^2}\right).
\end{align}

Now, part one of the Theorem (Eq. \eqref{eq:mainMix:res1s}) follows from setting the standard observation that $\langle{\v_t\v_t^{\top},\nabla_{\X}f(\X_t,\y_t)}\rangle \leq \langle{\X^{*},\nabla_{\X}f(\X_t,\y_t)}\rangle$, and from here the $O(1/t)$ rate follows from standard arguments involving the convexity of $f(\cdot)$ and the fact that $\mS_n,\mK$ are bounded.

We now continue to prove the second part of the theorem (Eq. \eqref{eq:mainMix:res2s}). Note that from Lemma \ref{lem:gradDistMix} and the first part of the theorem, it follows that there exists $T_0 = O\left({\frac{\Vert{\mA}\Vert^2\beta_g^3(\Vert{\mA}\Vert^2+D_{\mK}^2)}{\alpha_g\delta^2}}\right)$ such that
\begin{eqnarray}\label{eq:mainMix}
\forall t\geq T_0: \qquad \Vert{\nabla_{\X}f(\X_t,\y_t)-\nabla_{\X}f(\X^*,\y^*)}\Vert_F \leq \frac{\delta}{3}.
\end{eqnarray}

Throughout the rest of the proof we focus on some iteration $t\geq T_0$. Denote $\z^* = \mA\X^*+\y^*$.

Observe that
\begin{eqnarray}\label{eq:mainMix:1}
\Vert{\y^*-\y_t}\Vert_2 &=&\Vert{(\z^*-\mA\X^*) - (\z_t-\mA\X_t)}\Vert_2 \leq \Vert{\z^*-\z_t}\Vert_2+ \Vert{\mA\X^*-\mA\X_t}\Vert_2 \nonumber  \\
&\leq& \sqrt{\frac{2}{\alpha_g}}\sqrt{f(\X_t,\y_t)-f^*} + \Vert{\mA}\Vert\Vert{\X^*-\X_t}\Vert_F,
\end{eqnarray}
where the last inequality follows from the strong convexity of $g(\cdot)$.

Let us write the eigen-decomposition of $\nabla{}_{\X}f(\X_t,\y_t)$ as $\nabla{}_{\X}f(\X_t,\y_t) = \sum_{i=1}^n\lambda_i\u_i\u_i^{\top}$, where the eigenvalues are ordered in non-increasing order. We now observe that since $\u_n$ is the leading eigenvector of $-\nabla_{\X}f(\X_t,\y_t)$, and since by Lemma \ref{lem:optSolStructMix}, $\X^*$ is a rank-one matrix which corresponds to the leading eigenvector of $-\nabla_{\X}f(\X^*,\y^*)$, then under the gap assumption (Assumption \ref{ass:gapMix}), and using the Davis-Kahan $\sin\theta$ theorem (see for instance Theorem 4 in \cite{GH15}), we have that
\begin{align}\label{eq:mainMix:4}
\Vert{\X^*-\X_t}\Vert_F &\leq \Vert{\u_n\u_n^{\top}-\X_t}\Vert_F + \Vert{\u_n\u_n^{\top}-\X^*}\Vert_F \nonumber \\
&\leq  \Vert{\u_n\u_n^{\top}-\X_t}\Vert_F + 2\sqrt{2}\frac{\Vert{\nabla_{\X}f(\X_t,\y_t)-\nabla_{\X}{}f(\X^*,\y^*)}\Vert_F}{\delta}.
\end{align}

Using Lemma \ref{lem:gradDistMix} we have
\begin{eqnarray*}
\Vert{\X^*-\X_t}\Vert_F \leq \Vert{\u_n\u_n^{\top}-\X_t}\Vert_F + C_0\sqrt{f(\X_t,\y_t)-f^*}.
\end{eqnarray*}
for $C_0 = \frac{4\Vert{\mA}\Vert\beta_g}{\sqrt{\alpha_g}\delta}$.

Plugging into \eqref{eq:mainMix:1} and using $(a+b)^2 \leq 2a^2+2b^2$, we have that
\begin{align}\label{eq:mainMix:2}
\Vert{\y^*-\y_t}\Vert_2^2 \leq C_1\Vert{\u_n\u_n^{\top}-\X_t}\Vert_F^2 + C_2\left({f(\X_t,\y_t)-f^*}\right),
\end{align}
for $C_1 = 4\Vert{\mA}\Vert^2$ and $C_2 = \frac{4}{\alpha_g} + 4\Vert{\mA}\Vert^2C_0^2 = \frac{4}{\alpha_g} + \frac{64\Vert{\mA}\Vert^4\beta_g^2}{\alpha_g\delta^2}$. 

Note that since $t\geq T_0$, using \eqref{eq:mainMix}, similarly to \eqref{eq:main:1}, it follows that $\lambda_{n-1}-\lambda_n \geq \frac{\delta}{3}$. Hence, the FW linear subproblem admits a unique optimal solution, and we can substitute $\v_t$ with $\u_n$ --- the leading eigenvector of $-\nabla{}f(\X_t,\y_t)$. Recall also that $\Vert{\u_n\u_n^{\top}-\X_t}\Vert_F^2 \leq 2(1-\u_n^{\top}\X_t\u_n)$. Thus, plugging-back into \eqref{eq:mainMix:0}, we have that for any $(\eta_{\X},\eta_{\y})\in[0,1]\times[0,1]$,
\begin{align}\label{eq:mainMix:3}
f(\X_{t+1},\y_{t+1}) &\leq f(\X_t,\y_t) + \eta_{\X}\langle{\u_n\u_n^{\top}-\X_t,\nabla_{\X}f(\X_t,\y_{t})}\rangle \nonumber \\
&+ 2\beta_g(1-\u_n^{\top}\X_t\u_n)(\Vert{\mA}\Vert^2\eta_{\X}^2 + \eta_{\y}^2C_1) \nonumber \\
& + \eta_{\y}(\y^*-\y_t)^{\top}\nabla_{\y}f(\X_t,\y_t)  + \beta_g\eta_{\y}^2C_2\left({f(\X_t,\y_t)-f^*}\right).
\end{align}

We now consider two cases. If $(1-\u_n^{\top}\X_t\u_n) \leq C_3(f(\X_t,\y_t)-f^*)$, for some $C_3>0$ to be determined later on, then, letting $\eta_{\X}=\eta_{\y}=\eta$ and using the convexity of $f(\cdot,\cdot)$, we have that for any $\eta\in[0,1]$ it holds that 
\begin{align*}
f(\X_{t+1},\y_{t+1}) - f^* &\leq \left({f(\X_t,\y_t)-f^*}\right)\left({1 - \eta + \beta_g\eta^2(2\Vert{\mA}\Vert^2C_3 +2C_1C_3+C_2)}\right) \\
&= \left({f(\X_t,\y_t)-f^*}\right)\left({1 - \eta + \eta^2\beta_gC_4}\right),
\end{align*}
where we define $C_4 = 2\Vert{\mA}\Vert^2C_3 +2C_1C_3+C_2 = 10\Vert{\mA}\Vert^2C_3 + \frac{4}{\alpha_g} + \frac{64\Vert{\mA}\Vert^4\beta_g^2}{\alpha_g\delta^2}$. 

If $2\beta_gC_4 > 1$, then taking $\eta = \frac{1}{2\beta_gC_4}$, we get
\begin{eqnarray*}
h_{t+1} \leq h_t\left({1-\frac{1}{4\beta_gC_4}}\right).
\end{eqnarray*}
Else, taking $\eta=1$ (and recalling that $1/2 \geq \beta_gC_4$) we obtain
\begin{eqnarray*}
h_{t+1} \leq \frac{h_t}{2}.
\end{eqnarray*}

In the second case ($(1-\u_n^{\top}\X_t\u_n) > C_3h_t$),  setting $\eta_{\y}=0$ in \eqref{eq:mainMix:3} we have
\begin{align*}
f(\X_{t+1},\y_{t+1})-f^* &\leq f(\X_t,\y_t)-f^* + \eta_{\X}\langle{\u_n\u_n^{\top}-\X_t,\nabla_{\X}f(\X_t,\y_{t})}\rangle\\
&+ 2\eta_{\X}^2\beta_g\Vert{\mA}\Vert^2(1-\u_n^{\top}\X_t\u_n).
\end{align*}

Using Eq. \eqref{eq:genQG:res1} from Lemma \ref{lem:genQG} w.r.t. the function $w(\X) := f(\X,\y_t)$ and with $\Y=\X=\X_t$, and recalling that according to Eq. \eqref{eq:mainMix}, $\lambda_{n-1}(\nabla{}w(\X_t)) - \lambda_n(\nabla{}w(\X_t)) \geq \frac{\delta}{3}$ (see similar calculation in \eqref{eq:main:1}), we have that
\begin{eqnarray*}
f(\X_{t+1},\y_{t+1})-f^* \leq f(\X_t,\y_t)-f^* - \eta_{\X}(1-\u_n^{\top}\X_t\u_n)\left({\frac{\delta}{3} - 2\eta_{\X}\beta_g\Vert{\mA}\Vert^2}\right).
\end{eqnarray*}

Thus, for any $\eta_{\X} \leq \frac{\delta}{6\beta_g\Vert{\mA}\Vert^2}$ (recalling $(1-\u_n^{\top}\X_t\u_n) > C_3h_t$) we have that
\begin{align*}
f(\X_{t+1},\y_{t+1})-f^* \leq f(\X_t,\y_t)-f^* - \eta_{\X}\left({\frac{\delta}{3} - 2\eta_{\X}\beta_g\Vert{\mA}\Vert^2}\right)C_3h_t.
\end{align*}

In particular, if $\frac{\delta}{12\beta_g\Vert{\mA}\Vert^2} \leq 1$, setting $\eta _{\X} = \frac{\delta}{12\beta_g\Vert{\mA}\Vert^2}$ we obtain
\begin{align*}
f(\X_{t+1},\y_{t+1})-f^* \leq f(\X_t,\y_t)-f^* - \frac{\delta^2C_3h_t}{72\beta_g\Vert{\mA}\Vert^2}.
\end{align*}

Else, setting $\eta_{\X}=1$ (and recalling $\delta/6 \geq 2\beta_g\Vert{\mA}\Vert^2$) we have
\begin{align*}
f(\X_{t+1},\y_{t+1})-f^* \leq f(\X_t,\y_t)-f^* - \frac{C_3\delta{}h_t}{6}.
\end{align*}



Thus, considering all four cases, we have that
\begin{align*}
h_{t+1} &\leq h_t\left({1 - \min\{\frac{1}{2},~\frac{1}{4\beta_gC_4},~\frac{C_3\delta^2}{72\beta_g\Vert{\mA}\Vert^2},~\frac{C_3\delta}{6}\}}\right) \\
&=h_t\left({1 - \min\{\frac{1}{2},~\frac{1}{4\beta_g\left({10\Vert{\mA}\Vert^2C_3+\frac{4\delta^2+64\Vert{\mA}\Vert^4\beta_g^2}{\alpha_g\delta^2}}\right)},~\frac{C_3\delta^2}{72\beta_g\Vert{\mA}\Vert^2},~\frac{C_3\delta}{6}\}}\right).
\end{align*}

Choosing for instance $C_3 = 1/\delta$ we get
\begin{align*}
h_{t+1} &\leq h_t\left({1 - \min\{\frac{1}{6},~\frac{\alpha_g\delta^2}{4\beta_g\left({10\alpha_g\delta\Vert{\mA}\Vert^2+4\delta^2+64\Vert{\mA}\Vert^4\beta_g^2}\right)},~\frac{\delta}{72\beta_g\Vert{\mA}\Vert^2}\}}\right).
\end{align*}

Finally, we turn to prove the third part of the theorem (Eq. \eqref{eq:mainMix:res3s}). Applying the Davis-Kahan $\sin\theta$ theorem in the same way as in the derivation of Eq. \eqref{eq:mainMix:4} above, and recalling that w.l.o.g. $\v_t = \u_n$, we have that
\begin{align*}
\Vert{\v_t\v_t^{\top}-\X^*}\Vert_F^2 &\leq  8\frac{\Vert{\nabla_{\X}f(\X_t,\y_t)-\nabla_{\X}{}f(\X^*,\y^*)}\Vert_F^2}{\delta^2} \leq \frac{16\beta_g^2\Vert{\mA}\Vert^2}{\alpha_g\delta^2}h_t,
\end{align*}
were the last inequality follows from plugging-in the bound in Lemma \ref{lem:gradDistMix}.
\end{proof}

\subsubsection{Verifying Assumption \ref{ass:gapMix}}

Similarly to Section \ref{sec:verify}, for Problem \eqref{eq:optProbMix} we can also suggest a simple procedure for practical verification of a lower-bound for the parameter $\delta$ in Assumption \ref{ass:gapMix} (provided it is greater than zero).

If we let $(\X^*,\y^*)$ denote an optimal solution to Problem \eqref{eq:optProbMix}, then combining Weyl's inequality for the eigenvalues and Lemma \ref{lem:gradDistMix}, we have that for any $t\geq 1$, the pair $(\X_t,\y_t)$ from Algorithm \ref{alg:mixed}, satisfies for all $i\in[n]$:
\begin{align*}
\vert{\lambda_i(\nabla_{\X}f(\X_t,\y_t)) - \lambda_i(\nabla_{\X}f(\X^*,\y^*))}\vert \leq \sqrt{\frac{2\Vert{\mA}\Vert^2\beta_g^2}{\alpha_g}(f(\X_t,\y_t)-f(\X^*,\y^*))}.
\end{align*}
Using the short notation $\textsc{gap}(\X,\y) = \lambda_{n-1}(\nabla_{\X}f(\X,\y)) - \lambda_n(\nabla_{\X}f(\X,\y))$, the above implies that
\begin{align}\label{eq:eigengapEstMix}
&\vert{\textsc{gap}(\X^*,\y^*) - \textsc{gap}(\X_t,\y_t)}\vert \leq 2 \sqrt{\frac{2\Vert{\mA}\Vert^2\beta_g^2}{\alpha_g}(f(\X_t,\y_t)-f(\X^*,\y^*))}.
\end{align}

Thus, as discussed in Section  \ref{sec:verify}, if  Assumption \ref{ass:gapMix} indeed holds with parameter $\delta > 0$, then using Eq. \eqref{eq:eigengapEstMix}, once we arrive at an iteration $t$ for which it holds that the RHS of \eqref{eq:eigengapEstMix} is smaller for instance than $\delta/3$, by computing the eigenvalues $\lambda_{n-1}(\nabla_{\X}f(\X_t,\y_t)), \lambda_n(\nabla_{\X}f(\X_t,\y_t))$, we can verify that Assumption \ref{ass:gapMix} holds with parameter at least $\delta/3$, and in particular that there exists a unique optimal solution and that its low-rank matrix component is indeed rank-one.

Finally, note that since $f$ is convex, we have that
\begin{align}\label{eq:verifyerrormix}
f(\X_t,\y_t)-f(\X^*,\y^*) &\leq \langle{\X_t-\X^*,\nabla_{\X}f(\X_t,\y_t)}\rangle + \langle{\y_t-\y^*,\nabla_{\y}f(\X_t,\y_t)}\rangle \nonumber \\
&\leq \langle{\X_t-\v_t\v_t^{\top},\nabla_{\X}f(\X_t,\y_t)}\rangle \nonumber \\
&~+ \langle{\y_t, \nabla_{\y}f(\X_t,\y_t)}\rangle - \min_{\u\in\mK}\langle{\u,\nabla_{\y}f(\X_t,\y_t)}\rangle,
\end{align}
where $\v_t$ is the eigenvector computed on iteration $t$ of Algorithm \ref{alg:mixed}. 

Thus, when linear minimization over $\mK$ is efficient, we can upper-bound the approximation error in the RHS of \eqref{eq:eigengapEstMix} with the RHS of \eqref{eq:verifyerrormix} which is efficient to compute.

\section{Extension to Nonsmooth Functions}\label{sec:nonsmooth}

We now consider an extension of our results to the case in which $f(\cdot)$ is convex over $\mbS^n$ but not smooth. For instance, as an example, two applications of interest in the context of rank-one matrix recovery are $f(\X) := \Vert{\X-\M}\Vert_1$, which is also a popular formulation of the Robust-PCA problem (here $\M$ is the observed data), and $f(\X) := \frac{1}{2}\Vert{\X-\M}\Vert_F^2 +\lambda\Vert{\X}\Vert_1$, which is useful when attempting to recover a matrix $\X$ that is both low-rank and sparse from the noisy observation $\M$ (e.g., \cite{Richard12,Garber18b}).

Towards this end, we recall the following sufficient and necessary optimality condition for constrained nonsmooth convex optimization.
\begin{lemma}[Corollary 3.68 in \cite{Beck17}]\label{lem:subgrad}
$\X^*\in\mS_n$ is an optimal solution of \eqref{eq:optProb} (even when $f$ is nonsmooth) if and only if there exists $\G^*\in\partial{}f(\X^*)$ such that 
\begin{align}\label{eq:nonsmooth:subgrad}
\forall\X\in\mS_n:\qquad \langle{\X-\X^*,\G^*}\rangle \geq 0.
\end{align}
\end{lemma}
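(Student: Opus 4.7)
The statement is the standard first-order optimality condition for convex optimization over a convex set, specialized to $\mS_n$, and the plan is to prove the two directions separately.

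For the sufficient direction (``if''), I would simply invoke the defining subgradient inequality: given any $\G^*\in\partial f(\X^*)$ satisfying \eqref{eq:nonsmooth:subgrad}, for every $\X\in\mS_n$ one has
\begin{align*}
f(\X) \;\geq\; f(\X^*) + \langle \X-\X^*,\G^*\rangle \;\geq\; f(\X^*),
\end{align*}
so $\X^*$ is a minimizer of $f$ over $\mS_n$. This is essentially a one-line computation and requires nothing beyond the definition of the subdifferential.

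For the necessary direction (``only if''), I would rewrite the constrained problem as the unconstrained problem of minimizing $F(\X):=f(\X)+\iota_{\mS_n}(\X)$, where $\iota_{\mS_n}$ is the $\{0,+\infty\}$-valued indicator of $\mS_n$. Fermat's rule says $\X^*$ is optimal if and only if $\mathbf{0}\in\partial F(\X^*)$. The plan is then to apply the Moreau--Rockafellar subdifferential sum rule, which yields $\partial F(\X^*)=\partial f(\X^*)+\partial \iota_{\mS_n}(\X^*)=\partial f(\X^*)+N_{\mS_n}(\X^*)$ provided a constraint qualification holds. The qualification is easy here because $f$ is finite and continuous on the whole space $\mbS^n$, so the relative interior condition $\mathrm{dom}(f)\cap\mathrm{ri}(\mS_n)\neq\emptyset$ is immediate (take any full-rank element of $\mS_n$, e.g.\ $\I/n$). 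Consequently there exist $\G^*\in\partial f(\X^*)$ and $\N\in N_{\mS_n}(\X^*)$ with $\G^*+\N=\mathbf{0}$, and by the very definition of the normal cone $\langle \X-\X^*,\N\rangle\leq 0$ for all $\X\in\mS_n$; substituting $\N=-\G^*$ yields \eqref{eq:nonsmooth:subgrad}.

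The main obstacle, as always in nonsmooth convex analysis, is justifying the split $\partial(f+\iota_{\mS_n})=\partial f+N_{\mS_n}$; this is the one nontrivial input to the argument. Since $f$ is assumed convex and finite (hence continuous) on all of $\mbS^n$, the standard Moreau--Rockafellar theorem applies without issue, and no deeper separation argument is needed. I would state this explicitly and then conclude, noting that the same proof goes through if one replaces $\mS_n$ by any closed convex set on which $f$ is continuous at some interior point.
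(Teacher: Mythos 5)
Your proof is correct: the sufficiency direction is the one-line subgradient inequality, and the necessity direction via Fermat's rule and the Moreau--Rockafellar sum rule (with the constraint qualification trivially satisfied because $f$ is finite, hence continuous, on all of $\mbS^n$) is exactly the standard argument behind the cited result. The paper itself gives no proof of this lemma --- it simply invokes Corollary 3.68 of \cite{Beck17} --- so your write-up supplies precisely the textbook reasoning that reference contains, and nothing in it needs repair.
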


The following assumption extends Assumption \ref{ass:gap} to nonsmooth functions.

\begin{assumption}\label{ass:gap:NS}
There exists an optimal solution $\X^*$ to Problem \eqref{eq:optProb} such that $\lambda_{n-1}(\G^*)-\lambda_n(\G^*)  =\delta > 0$, where $\G^*$ is a subgradient of $f(\cdot)$ at $\X^*$ for which Eq. \eqref{eq:nonsmooth:subgrad} holds.
\end{assumption}

\begin{lemma}\label{lem:QG:NS}
Suppose Assumption \ref{ass:gap:NS} holds for some optimal solution $\X^*\in\mS_n$. Then, $\X^*$ is both the unique optimal solution to Problem \eqref{eq:optProb}, and rank-one. Moreover, Problem \eqref{eq:optProb} has the quadratic growth property
\begin{align*}
\forall \X\in\mS_n: \quad \Vert{\X-\X^*}\Vert_F^2 \leq \frac{2}{\delta}\left({f(\X)- f^*)}\right),
\end{align*}
even though $f(\cdot)$ is nonsmooth.
\end{lemma}

\begin{proof}
From Lemma \ref{lem:subgrad} if follows that under Assumption \ref{ass:gap:NS}, $\X^*$ must be the unique rank-one matrix corresponding to the eigenvector of $\G^*$ with smallest eigenvalue (where $\G^*$ is the subgradeint defined in Lemma \ref{lem:subgrad}), since otherwise, letting $\u_n^*$ denote the eigenvector of $\G^*$ corresponding to the smallest eigenvalue, we will have that $\langle{\u_n^*\u_n^{*\top}-\X^*,\G^*}\rangle < 0$, which contradicts the optimality of $\X^*$. 

Using the above, the quadratic growth property follows from repeating the steps of the proof of Eq. \eqref{eq:genQG:res2} in Lemma \ref{lem:genQG}, replacing $\nabla{}f(\X^*)$ with $\G^*$.
\end{proof}

Towards applying Frank-Wolfe-type methods to Problem \eqref{eq:optProb} with nonsmooth $f$, we will consider a standard approach of replacing the nonsmooth $f(\cdot)$ with a smooth approximation. 
\begin{definition}
We say a convex function $f_{(\alpha,\beta)}:\mbS^n\rightarrow\reals$ is an $(\alpha,\beta)$-smooth approximation of a convex function $f:\mbS^n\rightarrow\reals$, if i) for all $\X\in\mbS^n$: $\vert{f(\X) - f_{(\alpha,\beta)}(\X)}\vert \leq \alpha$, and ii) $f_{(\alpha,\beta)}$ is $\beta$-smooth.
\end{definition}
We refer the interested reader to \cite{Beck12} for an in-depth treatment of the subject of constructing smooth approximations with many important examples.

We note that typically $\beta$ scales with $1/\alpha$. In particular, usually $\alpha$ is chosen so that $\alpha = O(\epsilon)$, where $\epsilon$ is the target approximation-error desired, which causes $\beta$ to be of the order $\beta = O(1/\epsilon)$. Note however that since, as discussed, the smoothness parameter will typically scale with $1/\epsilon$, the results in Theorems \ref{thm:rankoneFW} and \ref{thm:rankoneFWimprov}, when applied to the smooth approximation $f_{(\alpha,\beta)}$, give fast convergence rates only after roughly $O(\beta^3) = O(1/\epsilon^3)$ initial iterations. Since applying the standard Frank-Wolfe convergence result to $f_{(\alpha,\beta)}(\cdot)$ will already result in a $O(1/\epsilon^2)$ rate, these fast rate results become meaningless. We thus consider only adapting the result of Theorem \ref{thm:regFW}, which does not have a ``burn-in'' phase, but does require an estimate of the gap $\delta$.

\begin{theorem}\label{thm:regFW_NS}
Under Assumption \ref{ass:gap:NS}, the iterates of Algorithm \ref{alg:regfw}, when applied to an $(\alpha,\beta)$-smooth approximation $f_{(\alpha,\beta)}$ of $f$,  and with gap estimate $\hat{\delta}$ such that $0 < \hat{\delta} \leq \delta$ (where $\delta$ is as defined in Assumption \ref{ass:gap:NS}), satisfy
\begin{align*}
\forall t\geq 0: \quad f(\X_{t+1})-f^* \leq \left({f(\X_1)-f^*}\right)\exp\Big({-\min\{\frac{1}{2},\frac{\hat{\delta}}{4\beta}\}t}\Big) + O(\alpha).
\end{align*}
\end{theorem}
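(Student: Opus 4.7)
The plan is to run Algorithm \ref{alg:regfw} on the smooth surrogate $f_{(\alpha,\beta)}$ and piggy-back on the quadratic growth property that Lemma \ref{lem:QG:NS} already supplies for the original nonsmooth $f$. First I would replay the proof of Theorem \ref{thm:regFW} almost verbatim, but with $\nabla f$ replaced by $\nabla f_{(\alpha,\beta)}$. Using $\beta$-smoothness of the surrogate, the fact that $\v_t$ minimizes the regularized linearization (so the rank-one minimizer $\X^*$ of $f$ produced by Lemma \ref{lem:QG:NS} is a valid suboptimal substitute), and convexity of $f_{(\alpha,\beta)}$, one obtains the one-step inequality
\[
f_{(\alpha,\beta)}(\X_{t+1}) - f_{(\alpha,\beta)}(\X^*) \leq (1-\eta)\bigl(f_{(\alpha,\beta)}(\X_t) - f_{(\alpha,\beta)}(\X^*)\bigr) + \frac{\eta^2\beta}{2}\|\X^*-\X_t\|_F^2.
\]

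The crux is controlling $\|\X^*-\X_t\|_F^2$. I cannot invoke Lemma \ref{lem:QG} for $f_{(\alpha,\beta)}$ itself, because the eigen-gap assumption is on a subgradient of the nonsmooth $f$, not on $\nabla f_{(\alpha,\beta)}$ at its own minimizer. Instead, I would use Lemma \ref{lem:QG:NS} directly on $f$, namely $\|\X_t-\X^*\|_F^2 \leq (2/\delta)(f(\X_t)-f^*)$, and then convert via $|f - f_{(\alpha,\beta)}| \leq \alpha$ to obtain the approximate quadratic growth $\|\X_t-\X^*\|_F^2 \leq (2/\delta)(\tilde h_t + 2\alpha)$, where $\tilde h_t := f_{(\alpha,\beta)}(\X_t) - f_{(\alpha,\beta)}(\X^*)$. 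Plugging this in, choosing $\eta = \hat\delta/(2\beta)$, and using $\hat\delta \leq \delta$ just as in the proof of Theorem \ref{thm:regFW}, the algebra collapses to the noisy contraction
\[
\tilde h_{t+1} \leq \left(1 - \frac{\hat\delta}{4\beta}\right)\tilde h_t + \frac{\hat\delta\,\alpha}{2\beta}.
\]
A standard geometric-series unrolling then yields $\tilde h_{t+1} \leq \tilde h_1 \exp(-\hat\delta t/(4\beta)) + 2\alpha$, and one final $\pm\alpha$ translation on each side converts this back to a bound on $f(\X_{t+1}) - f^*$, with $\tilde h_1 \leq f(\X_1) - f^* + 2\alpha$, matching the claimed statement up to absorbing the additive $O(\alpha)$ terms.

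The only real obstacle is bookkeeping: throughout, the "anchor point" $\X^*$ is the minimizer of $f$, not of $f_{(\alpha,\beta)}$, so the convexity step and the quadratic growth step must be run in terms of the surrogate gap $\tilde h_t$ measured against $f_{(\alpha,\beta)}(\X^*)$ rather than against $\min f_{(\alpha,\beta)}$. The $\alpha$-approximation is paid exactly twice (once when passing from $f$ to $f_{(\alpha,\beta)}$ to invoke quadratic growth inside the recursion, and once at the end when returning to $f$), which is why the final additive error is $O(\alpha)$ and not $O(\alpha)\cdot(\beta/\hat\delta)$ or worse. The noisy-contraction unrolling and the choice $\eta=\hat\delta/(2\beta)$ are routine and mirror exactly the smooth case.
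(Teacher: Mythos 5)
Your proposal is correct and follows essentially the same route as the paper's proof: run the regularized Frank--Wolfe analysis on the surrogate $f_{(\alpha,\beta)}$, substitute the rank-one $\X^*$ via the optimality of $\v_t$, invoke Lemma \ref{lem:QG:NS} together with $\hat{\delta}\leq\delta$ and the $\pm\alpha$ approximation to get a noisy contraction with step $\eta=\hat{\delta}/(2\beta)$, and unroll geometrically. The only (immaterial) difference is bookkeeping: you anchor the recursion at $f_{(\alpha,\beta)}(\X^*)$ throughout, whereas the paper anchors at $f_{(\alpha,\beta)}^*=\min_{\X\in\mS_n}f_{(\alpha,\beta)}(\X)$ and pays the $2\alpha$ conversion inside the recursion via $f_{(\alpha,\beta)}^*\geq f_{(\alpha,\beta)}(\X^*)-2\alpha$.
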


Indeed, we see that in the typical case $\frac{\hat{\delta}}{4\beta}\leq\frac{1}{2}$, and when $\alpha = O(\epsilon)$, $\beta = O(1/\epsilon)$, the number of iterations to reach $O(\epsilon)$ approximation error is of the order $O\left({\frac{\log{1/\epsilon}}{\hat{\delta}\epsilon}}\right)$, which up to a $\log{1/\epsilon}$ factor, is what we expect when optimizing a nonsmooth $\hat{\delta}$-strongly convex function.

\begin{proof}
The proof follows from simple modifications of the proof of Theorem \ref{thm:regFW}, as we now detail. Let us denote $f_{(\alpha,\beta)}^* = \min_{\X\in\mS_n}f_{(\alpha,\beta)}(\X)$. On any iteration $t$ it holds that
\begin{align*}
f_{(\alpha,\beta)}(\X_{t+1}) - f_{(\alpha,\beta)}^* &\underset{(a)}{\leq} f_{(\alpha,\beta)}(\X_t) -f_{(\alpha,\beta)}^* + \eta\langle{\v_t\v_t^{\top}-\X_t,\nabla{}f_{(\alpha,\beta)}(\X_t)}\rangle \\
&+ \frac{\eta^2\beta}{2}\Vert{\v_t\v_t^{\top}-\X_t}\Vert_F^2 \\
&\underset{(b)}{\leq} f_{(\alpha,\beta)}(\X_t) -f_{(\alpha,\beta)}^* + \eta\langle{\X^*-\X_t,\nabla{}f_{(\alpha,\beta)}(\X_t)}\rangle \\
&+ \frac{\eta^2\beta}{2}\Vert{\X^*-\X_t}\Vert_F^2 \\
&\underset{(c)}{\leq} f_{(\alpha,\beta)}(\X_t) -f_{(\alpha,\beta)}^* - \eta\left({f_{(\alpha,\beta)}(\X_t) -f_{(\alpha,\beta)}(\X^*)}\right)  \\
&+ \frac{\eta^2\beta}{\hat{\delta}}\left({f(\X_t)-f(\X^*)}\right),
\end{align*}
where (a) follows from the $\beta$-smoothness of $f_{(\alpha,\beta)}$, (b) follows from the optimal choice of $\v_t$ and since, under Assumption \ref{ass:gap:NS}, $\X^*$ is rank-one, and (c) follows from the convexity of $f_{(\alpha,\beta)}$, Lemma \ref{lem:QG:NS}, and since  $\hat{\delta} \leq \delta$.

Let $\Z^*$ be the minimizer of $f_{(\alpha,\beta)}$ over $\mS_n$. 
Since $f_{(\alpha,\beta)}^*= f_{(\alpha,\beta)}(\Z^*) \geq f(\Z^*) - \alpha \geq f(\X^*) - \alpha \geq f_{(\alpha,\beta)}(\X^*) - 2\alpha$ and $f(\X^*) \geq f_{(\alpha,\beta)}(\X^*)-\alpha \geq f_{(\alpha,\beta)}^*-\alpha$,  the above leads to
\begin{align*}
f_{(\alpha,\beta)}(\X_{t+1})-f_{(\alpha,\beta)}^* &\leq \left({f_{(\alpha,\beta)}(\X_t)-f_{(\alpha,\beta)}^*}\right)\Big({1-\eta + \frac{\eta^2\beta}{\hat{\delta}}}\Big)\\
& + \alpha(2\eta + 2\eta^2\beta/\hat{\delta}).
\end{align*}
Plugging-in the value of $\eta$ and considering the two possible cases as in the proof of Theorem \ref{thm:regFW}, we have
\begin{align*}
f_{(\alpha,\beta)}(\X_{t+1})-f_{(\alpha,\beta)}^* &\leq \left({f_{(\alpha,\beta)}(\X_t)-f_{(\alpha,\beta)}^*}\right)\Big({1-\min\{\frac{1}{2},\frac{\hat{\delta}}{4\beta}\}}\Big) \\
&+ 3\alpha\min\{1, \frac{\hat{\delta}}{2\beta}\}.
\end{align*}
Unrolling the recursion, using $1-x \leq e^{-x}$, and the formula for the sum of an infinite converging geometric series, we get 
\begin{align*}
f_{(\alpha,\beta)}(\X_{t+1})-f_{(\alpha,\beta)}^* \leq \left({f_{(\alpha,\beta)}(\X_1)-f_{(\alpha,\beta)}^*}\right)\exp\Big({-\min\{\frac{1}{2},\frac{\hat{\delta}}{4\beta}\}t}\Big) + O(\alpha).
\end{align*}

Finally, replacing $f_{(\alpha,\beta)}(\cdot)$ with $f(\cdot)$, we have that
\begin{align*}
f(\X_{t+1})-f^* \leq \left({f(\X_1)-f^*}\right)\exp\Big({-\min\{\frac{1}{2},\frac{\hat{\delta}}{4\beta}\}t}\Big) + O(\alpha).
\end{align*}

\end{proof}

\section{Numerical Experiments}\label{sec:exp}

In this section we bring empirical evidence in support of our main assumption, Assumption \ref{ass:gap} (and the closely-related Assumption \ref{ass:gapMix}), and some empirical comparison between the various methods considered in this work.

\subsection{Empirical evidence for gap assumption}
We consider two tasks, one of recovering a rank-one matrix from quadratic measurements, a problem closely related to phase-retrieval (for which the underlying assumption is Assumption \ref{ass:gap}), and rank-one robust PCA  (for which the underlying assumption is Assumption \ref{ass:gapMix}). In both cases we construct synthetic random instances of the problems and demonstrate that i) the proposed models indeed recover the ground-truth signal with low error, and ii) the data indeed satisfy the gap assumption. 

\paragraph{Rank-one recovery from quadratic measurements:}
 We let $\x_0=\sqrt{n}\v_0$, where $\v_0\in\reals^n$ is a random unit vector, and we draw $m$ pairs of random unit vectors $\{(\a_i,\b_i)\}_{i=1}^m\subset\reals^n\times\reals^n$. The vector of quadratic measurements of $\x_0$ is given by $\y_0(i)= \a_i^{\top}\x_0\x_0^{\top}\b_i$, $\y_0\in\reals^m$, and the observed noisy vector is given by $\y = \y_0 + \sqrt{c}\n$, where $\n\in\reals^m$ is a vector with standard Gaussian entries.
The goal is to recover the rank-one matrix $\x_0\x_0^{\top}$ from the noisy measurements vector $\y$, and towards this we consider the problem
\begin{align}\label{eq:quadSenseProb}
\min_{\X\succeq 0~\trace(\X)=\tau}\{f(\X) := \frac{1}{2}\sum_{i=1}^m\left({\a_i^{\top}\X\b_i - \y(i)}\right)^2\}.
\end{align}

We solve Problem \eqref{eq:quadSenseProb} to high accuracy (approximation error w.r.t. function value less than 1e-12 in our MATLAB implementation\footnote{the bound on the approximation error is verified by computing the duality gap, which is an upper-bound on the approximation error w.r.t. the function value (see for instance \cite{Jaggi13b})}) using the standard Frank-Wolfe method (Algorithm \ref{alg:fw}), and we denote the found solution by $\X^*$. We produce our estimate for the ground-truth vector $\x_0$ by computing the leading eigenvector of the matrix $-\nabla{}f(\X^*)$, which we denote by $\v^*$, and scaling it to have the same norm as $\x_0$, i.e., we take the vector $\sqrt{n}\v^*$. Note that estimation based on the eigenvector $\v^*$ is motivated by Eq. \eqref{eq:main:res3} (in particular, since $\X^*$ is only a high-accuracy approximated solution, it may not be rank-one). We measure the relative recovery error by $\Vert{n\v^*\v^{*\top}-\x_0\x_0^{\top}}\Vert_F^2/\Vert{\x_0\x_0^{\top}}\Vert_F^2 = \frac{1}{n^2}\Vert{n\v^*\v^{*\top}-\x_0\x_0^{\top}}\Vert_F^2$. 


In our experiments we set $m=20n$, $\tau = 0.5n$, and the noise parameter $c$ is set to either $0.5$ or $1.5$. We note that we choose the trace bound $\tau$ strictly smaller than $\trace(\x_0\x_0^{\top})=n$, since otherwise the optimal solution will naturally also fit some of the noise and will result in a higher-rank matrix. All results are averaged over 20 i.i.d runs. The results are presented in Table \ref{table:quadrecoverExp}.

As it can be seen in Table \ref{table:quadrecoverExp}, all random instances indeed satisfy Assumption \ref{ass:gap} with substantial eigen-gap. Moreover, the gap does not vary much with the dimension. We note that even though $\X^*$ is only a high-accuracy approximated solution to  \eqref{eq:quadSenseProb} (approximation error $<$ 1e-12), since Problem \eqref{eq:quadSenseProb} satisfies Assumption \ref{ass:funcStruct} (i.e., it can be written as $f(\X)=g(\mA\X)$ with $g(\x) = \frac{1}{2}\Vert{\x-\y}\Vert_2^2$, and so $\alpha_g=\beta_g=1$), a simple calculation using Eq. \eqref{eq:eigengapEst}, and recalling that $\a_i,\b_i, i=1,\dots,m$ are all unit vectors, implies that the eigen-gap estimates in Table \ref{table:quadrecoverExp} represent, up to negligible error, the eigen-gaps in the gradient vector at the exact optimal solution. Using Lemma \ref{lem:optSolStruct}, this in turn verifies that for all random instances it holds that there is a unique optimal solution and that it is indeed rank-one.

\begin{table*}\renewcommand{\arraystretch}{1.3}
{\footnotesize
\begin{center}
  \begin{tabular}{| c | c | c | c | c | c |} \hline
    noise level ($c$) &dimension  ($n$)& avg. recovery error & min/avg. gap in $\nabla{}f(\X^*)$  & avg. SNR \\ \hline
0.5 & 100 & 0.0638 & 2.9730 / 4.5488  & 1.9931\\ \hline
0.5 & 200 & 0.0621 & 3.7889 / 4.3656 & 1.9935\\ \hline
0.5 & 400 & 0.0625 & 3.8897 / 4.3656 & 2.0053\\ \hline
0.5 & 600 & 0.0623 & 3.9671 / 4.3927 & 2.0141\\ \hline
1.5 & 100 & 0.1146 & 1.1551 / 2.3836  & 0.6736\\ \hline
1.5 &200 & 0.1129 & 1.5993 / 1.9936 & 0.6735\\ \hline
1.5 &400 & 0.1142 & 1.4172 / 1.9756 & 0.6547\\ \hline
1.5 &600 & 0.1143 & 1.1452 / 1.9320 & 0.6582\\ \hline
  \end{tabular}
\caption{Results for Problem \eqref{eq:quadSenseProb}. The recovery error is given by $\Vert{n\v^*\v^{*\top}-\x_0\x_0^{\top}}\Vert_F^2/\Vert{\x_0\x_0^{\top}}\Vert_F^2$ ($\v^*$ is leading eigenvector of $-\nabla{}f(\X^*)$), the gap in $\nabla{}f(\X^*)$ is given by $\lambda_{n-1}(\nabla{}f(\X^*)) - \lambda_n(\nabla{}f(\X^*))$, and the signal-to-noise ratio (SNR) is given by $\Vert{\y_0}\Vert^2/\Vert{\sqrt{c}\n}\Vert^2$.}
  \label{table:quadrecoverExp}
\end{center}
}
\end{table*}\renewcommand{\arraystretch}{1}

\paragraph{Rank-one Robust PCA:} We consider the task of extracting a rank-one matrix from its sparsely-corrupted observation. We let $\M = \x_0\x_0^{\top} + \frac{1}{2}(\Y_0+\Y_0^{\top})$, where $\x_0\in\reals^n$ is a random unit vector ($\x_0\x_0^{\top}$ is the rank-one matrix to recover), and $\Y_0$ is sparse, with each entry being either $1$ or $-1$ with probability $p$ and zero otherwise ($p<< 1$).
 Towards recovering $\X_0 = \x_0\x_0^{\top}$, we consider the optimization problem
\begin{align}\label{eq:RPCAexp}
\min_{\X\succeq 0~\trace(\X)=\tau,~\Y:\Vert{\Y}\Vert_1 \leq s}\{f(\X,\Y) := \frac{1}{2}\Vert{\X+\Y-\M}\Vert_F^2\}.
\end{align}

Similarly to the previous example, we solve Problem \eqref{eq:RPCAexp} to high accuracy (approximation error w.r.t. function value less than 1e-12) using our Algorithm \ref{alg:mixed}, and we denote by $(\X^*,\Y^*)$ the obtained solution. As before, since $\X^*$ may not be rank-one, we produce our estimate for the ground-truth matrix $\X_0$ by taking the matrix $\v^*\v^{*\top}$, where $\v^*$ is the leading eigenvector of $-\nabla_{\X}f(\X^*,\y^*)$ (note this is motivated by Eq. \eqref{eq:mainMix:res3s}), and we measure the recovery error by $\Vert{\v^*\v^{*\top}-\X_0}\Vert_F^2$.
In all experiments we set $s=0.97\cdot\Vert{\frac{1}{2}(\Y_0+\Y_0^{\top})}\Vert_1$, $\tau = 0.7$, and the noise sampling probability $p$ is either  $1/\sqrt{25n}$ or  $1/\sqrt{n}$. All results are averaged over 20 i.i.d runs. The results are presented in Table \ref{table:RPCAExp}.

As in the previous example,  Table \ref{table:quadrecoverExp} shows that all random instances indeed satisfy Assumption \ref{ass:gapMix} with substantial gap, and the gap does not change drastically with the dimension. Here also we note that even though $(\X^*,\y^*)$ is only a high-accuracy approximated solution  (approximation error $<$ 1e-12),  a simple calculation using Eq. \eqref{eq:eigengapEstMix} implies that the eigen-gap estimates in Table \ref{table:quadrecoverExp} represent, up to negligible error, the eigen-gaps in the gradient vector at the exact optimal solution. Using Lemma \ref{lem:optSolStructMix}, this verifies that for all random instances it holds that there is a unique optimal solution pair, and that the low-rank matrix component of it is indeed rank-one.

\begin{table*}\renewcommand{\arraystretch}{1.3}
{\footnotesize
\begin{center}
  \begin{tabular}{| c | c | c | c | c |} \hline
   noise prob ($p$) & dimension  ($n$) & avg. recovery error & min/avg. gap in $\nabla_{\X}{}f(\X^*)$ & avg. SNR \\ \hline
$1/\sqrt{25n}$ & 100 &0.0026 & 0.2017 / 0.2179  & 0.0098\\ \hline
$1/\sqrt{25n}$ & 200 & 0.0028 &0.2091 / 0.2169 & 0.0035\\ \hline
$1/\sqrt{25n}$ & 400 & 0.0040 &0.1995 / 0.2056 & 0.0012\\ \hline
$1/\sqrt{25n}$ & 600 & 0.0046 & 0.1966 / 0.2010  & 6.7945e-04\\ \hline
$1/\sqrt{25n}$ & 1000 & 0.0058 &0.1850 / 0.1888 & 3.1631e-04\\ \hline
$1/\sqrt{n}$ & 100 & 0.0153 &0.0996 / 0.1177  & 0.0020\\ \hline
$1/\sqrt{n}$ & 200 & 0.0178 &0.0956 / 0.1080 & 7.0348e-04\\ \hline
$1/\sqrt{n}$ & 400 & 0.0216 &0.0793 / 0.0953 & 2.4988e-04\\ \hline
$1/\sqrt{n}$ & 600 & 0.0260 &0.0724 / 0.0792 & 1.3607e-04\\ \hline
$1/\sqrt{n}$ & 1000 & 0.0323 &0.0484 / 0.0599 & 6.3072e-05\\ \hline
  \end{tabular}
\caption{Results for Problem \eqref{eq:RPCAexp}. The recovery error is given by $\Vert{\v^*\v^{*\top}-\x_0\x_0^{\top}}\Vert_F^2$ ($\v^*$ is leading eigenvector of $-\nabla_{\X}f(\X^*,\y^*)$), the gap in $\nabla_{\X}f(\X^*)$ is given by $\lambda_{n-1}(\nabla_{\X}f(\X^*)) - \lambda_n(\nabla_{\X}f(\X^*))$, and the signal-to-noise ratio (SNR) is given by $\Vert{\x_0\x_0^{\top}}\Vert_F^2/\Vert{\frac{1}{2}(\Y_0+\Y_0^{\top})}\Vert_F^2$.}
  \label{table:RPCAExp}
\end{center}
}
\end{table*}\renewcommand{\arraystretch}{1}

\subsection{Comparison of Frank-Wolfe variants}

\begin{table*}\renewcommand{\arraystretch}{1.3}
{\footnotesize
\begin{center}
  \begin{tabular}{|p{0.18\linewidth} | p{0.83\linewidth}|} \hline
   algorithm &description \\ \hline
   FW-ls(opt1) & Frank-Wolfe  with exact line search (Algorithm \ref{alg:fw} with option 1) \\ \hline
   FW-ls(opt2) & Frank-Wolfe  with line-search over quadratic upper-bound (Algorithm \ref{alg:fw} with option 2)\\ \hline
   FWPG & Frank-Wolfe +  projected gradient steps (Algorithm \ref{alg:fwpg} with option 1) \\ \hline
   RegFW-ls(opt1) & Regularized Frank-Wolfe (Algorithm \ref{alg:regfw}). After computing the eigenvector $\v_t$ on each iteration $t$, the step-size is set via exact line-search (similarly to option 1 in Algorithm \ref{alg:fw}). This does not change the theoretical convergence guarantees but significantly improves the convergence in practice. The gap estimate $\hat{\delta}$ which the algorithm requires is taken from Table \ref{table:quadrecoverExp}: we set $\hat{\delta} = 3$ when $c=0.5$ and $\hat{\delta}=1$ when $c=1.5$  \\ \hline
  \end{tabular}
\caption{Description of Frank-Wolfe variants used in the numerical comparison.}
  \label{table:algorithms}
\end{center}
}
\end{table*}\renewcommand{\arraystretch}{1}

We turn to present preliminary empirical comparison between four Frank-Wolfe variants presented, on the rank-one recovery from quadratic measurements task --- Problem \eqref{eq:quadSenseProb}, fixing the dimension to $n=200$ and setting the noise parameter $c$ to either $0.5$ or $1.5$. The tested algorithms are detailed in Table \ref{table:algorithms}.

Since all variants except for FW-ls(opt1) rely on the smoothness parameter $\beta$, we try several values. We begin with $\beta=\sqrt{n}=\sqrt{200}$ and observe that this choice seems quite conservative, and thus we also try $\beta = 1$ and $\beta = 0.1$.
All algorithms are initialized with the same matrix which is generated as follows: we pick $\x\in\reals^n$ to be a random unit-norm vector. We then set the initialization to $\X_1 \gets\arg\min_{\Y\succeq 0,\trace(\Y)=\tau}\langle{\Y,\nabla{}f(\tau\cdot\x\x^{\top})}\rangle$ \footnote{we note this is a common initialization for Frank-Wolfe, and actually is equivalent to initializing Frank-Wolfe with $\tau\cdot\x\x^{\top}$, and running for one iteration with the classical step-size rule $\eta_t = \frac{2}{t+1}$}. Note that $\X_1$ simply corresponds to computing the leading eigenvector of $-\nabla{}f(\tau\cdot\x\x^{\top})$ and returning the corresponding rank-one matrix scaled by $\tau$. The results are the average of 20 i.i.d runs.

\begin{figure}
     \centering
     \begin{subfigure}[t]{0.43\textwidth}
         \centering
         \includegraphics[width=\textwidth]{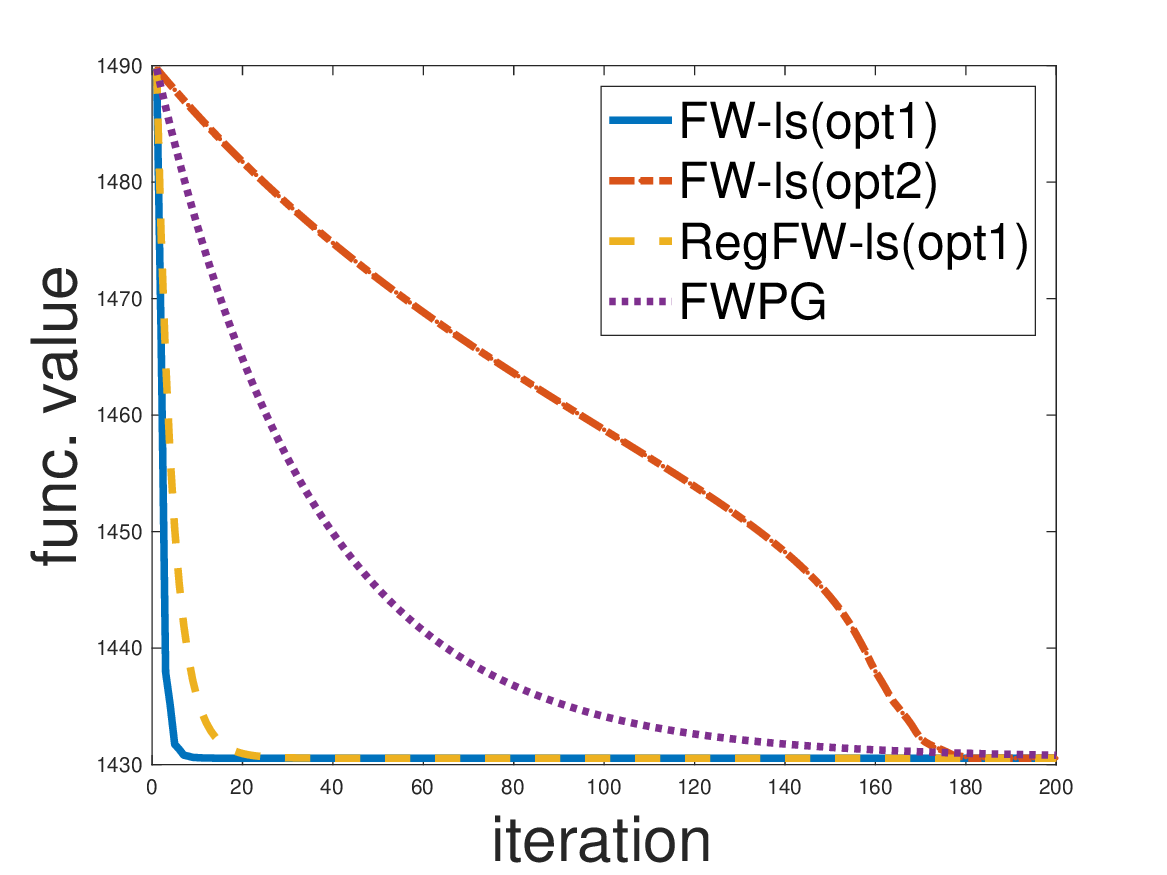}
         \caption{$c=0.5, \beta=\sqrt{200}$}
         \label{fig:y equals x}
     \end{subfigure}
     \begin{subfigure}[t]{0.43\textwidth}
         \centering
         \includegraphics[width=\textwidth]{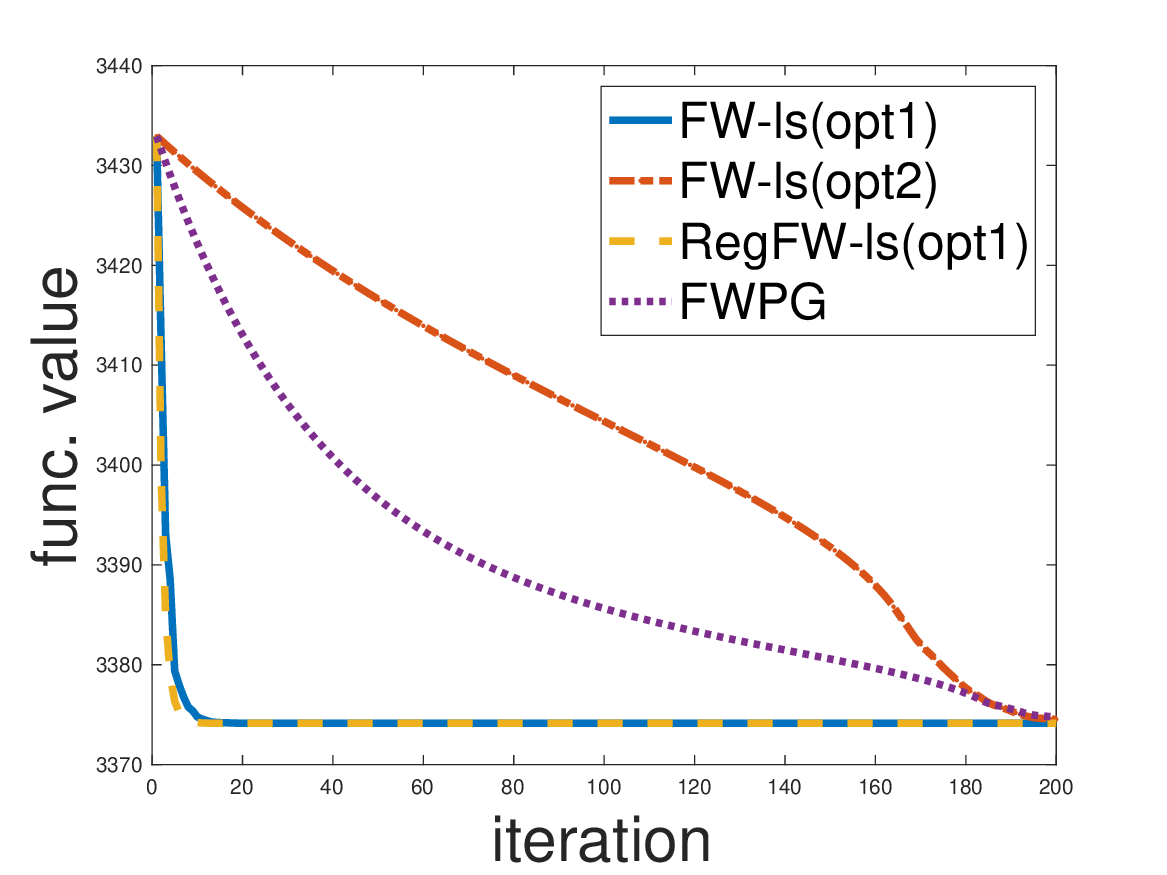}
         \caption{$c=1.5, \beta=\sqrt{200}$}
         \label{fig:three sin x}         
     \end{subfigure}
     \hfill
     
     \begin{subfigure}[t]{0.43\textwidth}
         \centering
         \includegraphics[width=\textwidth]{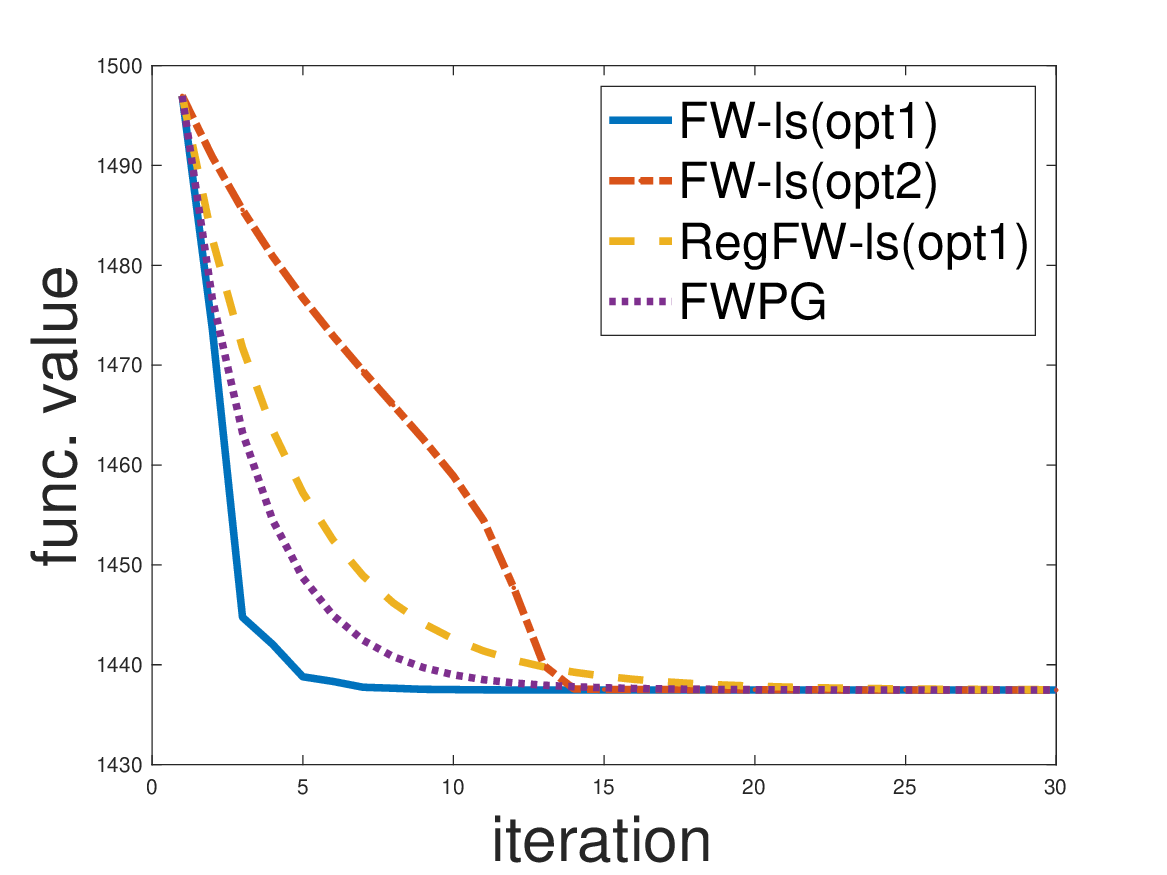}
         \caption{$c=0.5, \beta=1$}
         \label{fig:five over x}
     \end{subfigure}
     \begin{subfigure}[t]{0.43\textwidth}
         \centering
         \includegraphics[width=\textwidth]{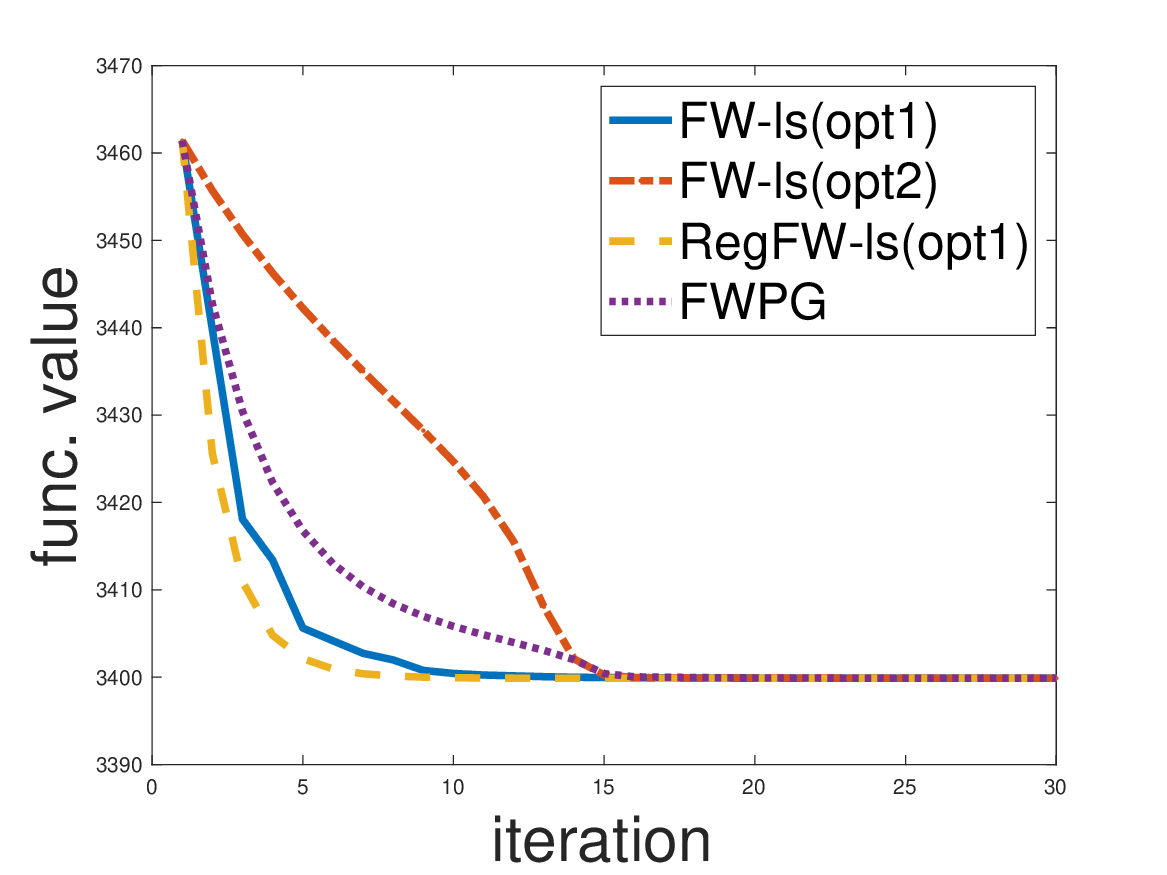}
         \caption{$c=1.5, \beta=1$}
         \label{fig:five over x}
     \end{subfigure}

     \begin{subfigure}[t]{0.43\textwidth}
         \centering
         \includegraphics[width=\textwidth]{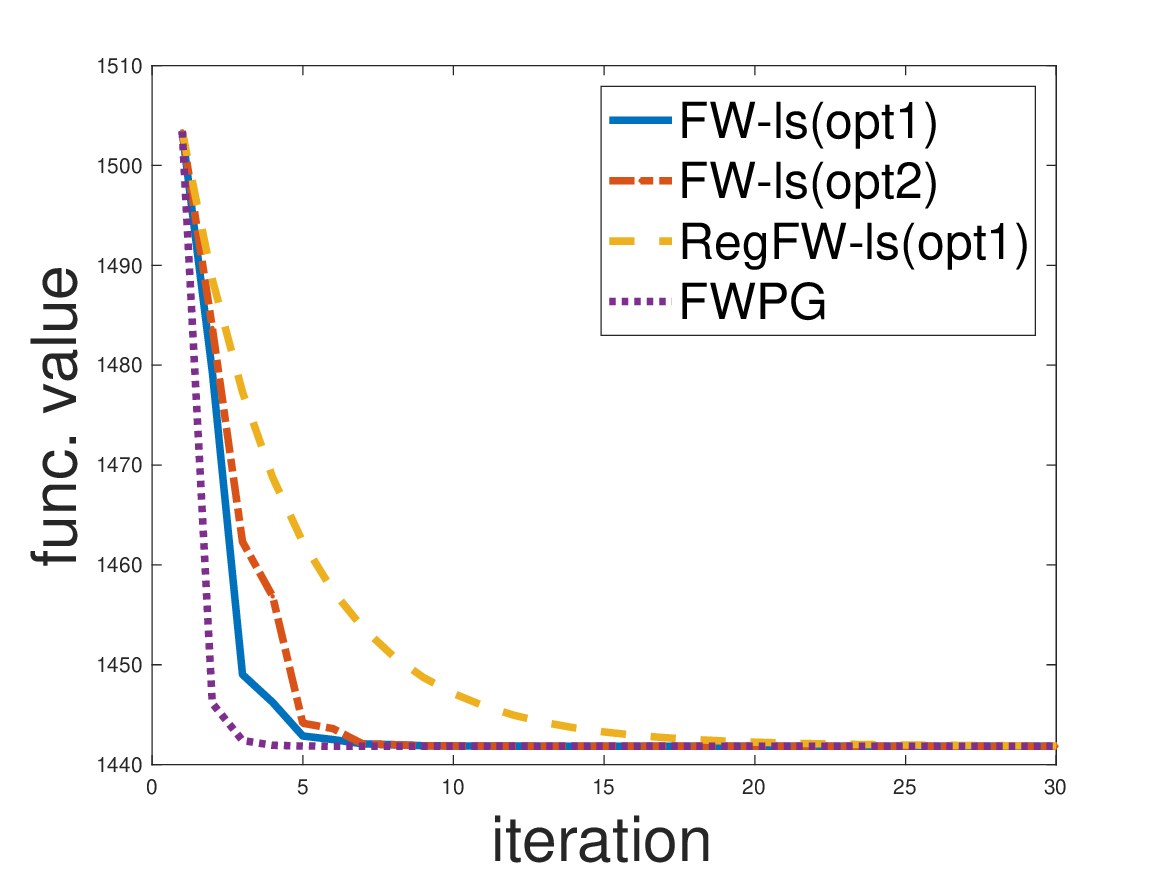}
         \caption{$c=0.5, \beta=0.1$}
         \label{fig:five over x}
     \end{subfigure}
     \begin{subfigure}[t]{0.43\textwidth}
         \centering
         \includegraphics[width=\textwidth]{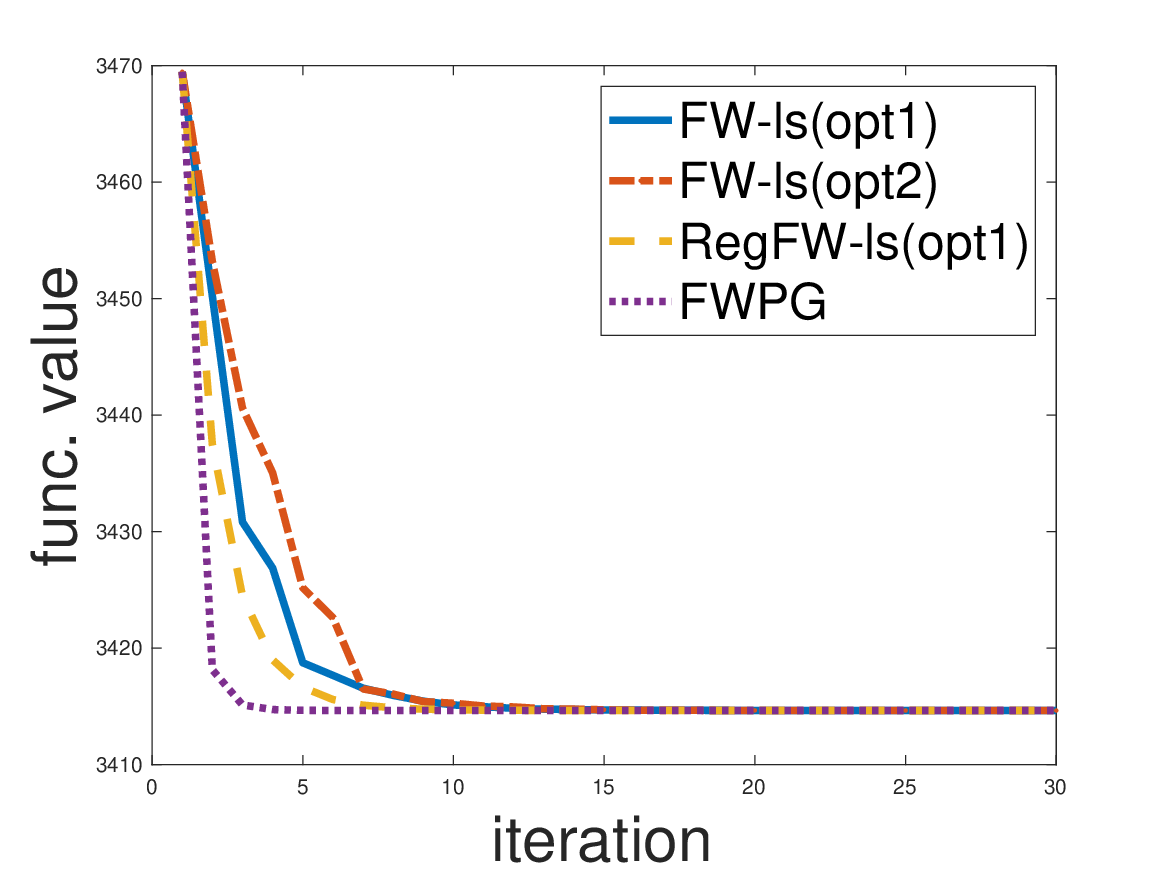}
         \caption{$c=1.5, \beta=0.1$}
         \label{fig:five over x}
     \end{subfigure}
        \caption{Comparison of Frank-Wolfe variants for rank-one matrix recovery from quadratic measurements (Problem \eqref{eq:quadSenseProb}).}
        \label{fig:compare}
\end{figure}

The results are given in Figure \ref{fig:compare}. We see that the variants RegFW-ls(opt1) and FWPG can indeed be faster than standard Frank-Wolfe with line-search (FW-ls(opt1) and FW-ls(opt2)) when tuned properly. In particular, for $\beta =0.1$ which gives the best results for all variants, we see that FWPG has the fastest convergence (with either $c=0.5$ or $c=1.5$). Importantly, when examining the rank of the iterates of FWPG, we observe that in all cases except for $(c=1.5,\beta=\sqrt{200})$ and $(c=1.5, \beta=1)$, our initialization already starts FWPG in the regime in which only projected gradient steps are used which means that FWPG only maintains a rank-one matrix throughout the run, as opposed to all other variants.


\appendix

\section{Proof of Lemma \ref{lem:gapRobust}}

The lemma is an adaptation of Lemma 3 in \cite{Garber19} (which considers optimization over trace-norm balls). We restate and prove a slightly more general version of the lemma.

\begin{lemma}
Let $f:\mbS^n\rightarrow\reals$ be $\beta$-smooth and convex. Let $\X^*\in\mS_n$ be an optimal solution of rank $r$ to the optimization problem $\min_{\X\in\mS_n}f(\X)$. Let $\lambda_1,\dots,\lambda_n$ denote the eigenvalues of $\nabla{}f(\X^*)$ in non-increasing order. Let $\zeta$ be a non-negative scalar. It holds that 
\begin{eqnarray*}
\rank(\Pi_{(1+\zeta)\mS_n}[\X^*-\beta^{-1}\nabla{}f(\X^*)]) > r \quad \Longleftrightarrow \quad \zeta >  r\beta{}(\lambda_{n-r}-\lambda_n),
\end{eqnarray*}
where $(1+\zeta)\mS_n = \{(1+\zeta)\X~|~\X\in\mS_n\}$, and $\Pi_{(1+\zeta)\mS_n}[\cdot]$ denotes the Euclidean projection onto the convex set $(1+\zeta)\mS_n$.

\end{lemma}
\begin{proof}
Let us write the eigen-decomposition of $\X^*$ as $\X^*=\sum_{i=1}^r\lambda_i^*\v_i\v_i^{\top}$. It follows from the optimality of $\X^*$ that for all $i\in[r]$, $\v_i$ is also an eigenvector of $\nabla{}f(\X^*)$ which corresponds to the smallest eigenvalue $\lambda_n$ (see Lemma 7 in \cite{Garber19}). Thus, if we let $\rho_1,\dots,\rho_n$ denote the eigenvalues (in non-increasing order) of $\Y := \X^*-\beta^{-1}\nabla{}f(\X^*)$, it holds that
\begin{eqnarray*}
\forall i\in[r]: \quad \rho_i &=& \lambda_i^* - \beta^{-1}\lambda_n;\\
\forall i>r: \quad \rho_i &=& \lambda_i^* - \beta^{-1}\lambda_{n-i+1}.
\end{eqnarray*}

Recall that $\sum_{i=1}^r\lambda_i^* =1$ and $\lambda_{r+1}^* = 0$.

It is well known that for any matrix $\M\in\mbS^n$ with eigen-decomposition $\M=\sum_{i=1}^n\sigma_i\u_i\u_i^{\top}$, the projection of $\M$ onto the set $(1+\zeta)\mS_n$, for any $\zeta \geq 0$ is given by
\begin{eqnarray*}
\Pi_{(1+\zeta)\mS_n}[\M] = \sum_{i=1}^n\max\{0,~\sigma_i-\sigma\}\u_i\u_i^{\top},
\end{eqnarray*}
where $\sigma\in\reals$ is the unique scalar such that $\sum_{i=1}^n\max\{0,~\sigma_i-\sigma\} = 1+\zeta$.

Now, we can see that $\rank(\Pi_{(1+\zeta)\mS_n}[\Y]) \leq r$ if and only if $\sigma \geq \rho_{r+1} = -\beta^{-1}\lambda_{n-r}$. 
Thus, if $\rank(\Pi_{(1+\zeta)\mS_n}[\Y]) \leq r$ then it must hold that $\sigma \geq -\beta^{-1}\lambda_{n-r}$ which implies that
\begin{align}\label{eq:robustProof:1}
1 + \zeta &= \sum_{i=1}^n\max\{0,~\rho_i-\sigma\}  = \sum_{i=1}^r\max\{0,~\rho_i-\sigma\} \leq \sum_{i=1}^r\max\{0,~\rho_i-(-\beta^{-1}\lambda_{n-r})\} \nonumber \\
&= \sum_{i=1}^r(\rho_i-(-\beta^{-1}\lambda_{n-r})) = \sum_{i=1}^r(\lambda_i +\beta(\lambda_{n-r}-\lambda_n)) = 1 + \beta{}r(\lambda_{n-r}-\lambda_{n}).
\end{align}
However, \eqref{eq:robustProof:1} can hold only if $\zeta \leq \beta{}r(\lambda_{n-r}-\lambda_n)$. Thus, we have $\rank(\Pi_{(1+\zeta)\mS_n}[\Y]) \leq r \Longrightarrow \zeta \leq \beta{}r(\lambda_{n-r}-\lambda_n)$.

On the other-hand,   if $\rank(\Pi_{(1+\zeta)\mS_n}[\Y]) > r$ then it must hold that $\sigma < -\beta^{-1}\lambda_{n-r}$ which, using the same arguments as above, implies that
\begin{align}\label{eq:robustProof:2}
1 + \zeta &= \sum_{i=1}^n\max\{0,~\rho_i-\sigma\}  > \sum_{i=1}^r\max\{0,~\rho_i-(-\beta^{-1}\lambda_{n-r})\} =1 + \beta{}r(\lambda_{n-r}-\lambda_{n}).
\end{align}
We see that \eqref{eq:robustProof:2} can hold only if $\zeta  > \beta{}r(\lambda_{n-r}-\lambda_n)$. Thus, we also have $\rank(\Pi_{(1+\zeta)\mS_n}[\Y]) > r \Longrightarrow \zeta > \beta{}r(\lambda_{n-r}-\lambda_n)$, and the lemma follows.
\end{proof}

\section{Proof of Lemma \ref{lem:robustGapAss}}

We first restate the lemma and then prove it.
\begin{lemma}
Let $f:\mbS^n\rightarrow\reals$ be $\beta$-smooth and convex. Suppose that Assumption \ref{ass:gap} holds w.r.t. $f(\cdot)$ with some parameter $\delta >0$. Let $\tilde{f}:\mbS^n\rightarrow\reals$ be differentiable and convex, and suppose that $\sup_{\X\in\mS_n}\Vert{\nabla{}f(\X) - \nabla\tilde{f}(\X)}\Vert_F \leq \nu$, for some $\nu > 0$. Then, for $\nu <  \frac{1}{2}(1+\frac{2\beta}{\delta})^{-1}\delta$, Assumption \ref{ass:gap} holds w.r.t. the function $\tilde{f}(\cdot)$ with parameter $\tilde{\delta} = \delta -  2\nu(1+\frac{2\beta}{\delta}) > 0$.
\end{lemma}
\begin{proof}
Let $\X^*$ and $\tilde{\X}^*$ denote minimizers of $f(\cdot)$ and $\tilde{f}(\cdot)$ over $\mS_n$, respectively. 
Since Assumption \ref{ass:gap} holds w.r.t. $f(\cdot)$, using the quadratic growth result of Lemma \ref{lem:genQG} we have that
\begin{align*}
\Vert{\tilde{\X}^* - \X^*}\Vert_F^2 &\leq \frac{2}{\delta}\left({f(\tilde{\X}^*)-f(\X^*)}\right) \underset{(a)}{\leq} \frac{2}{\delta}\langle{\tilde{\X}^*-\X^*, \nabla{}f(\tilde{\X}^*)}\rangle \\
&= \frac{2}{\delta}\left({\langle{\tilde{\X}^*-\X^*, \nabla{}\tilde{f}(\tilde{\X}^*)}\rangle + \langle{\tilde{\X}^*-\X^*, \nabla{}f(\tilde{\X}^*)-\nabla{}\tilde{f}(\tilde{\X}^*)}\rangle}\right)\\
&\underset{(b)}{\leq} \frac{2}{\delta} \langle{\tilde{\X}^*-\X^*, \nabla{}f(\tilde{\X}^*) - \nabla{}\tilde{f}(\tilde{\X}^*)\rangle}
\underset{(c)}{\leq} \frac{2\nu}{\delta}\Vert{\tilde{\X}^*-\X^*}\Vert_F,
\end{align*}
where $(a)$ follows from convexity of $f(\cdot)$, $(b)$ follows from optimality of $\tilde{\X}^*$ w.r.t. $\tilde{f}(\cdot)$, and $(c)$ follows from the Cauchy-Schwarz inequality and the assumption of the lemma that $\sup_{\X\in\mS_n}\Vert{\nabla{}f(\X) - \nabla\tilde{f}(\X)}\Vert_F \leq \nu$.

Thus, we get that $\Vert{\tilde{\X}^*-\X^*}\Vert_F \leq \frac{2\nu}{\delta}$.

Using Weyl's inequality for the eigenvalues we have that
\begin{align}\label{lem:perturbGap:1}
\lambda_{n}(\nabla{}\tilde{f}(\tilde{\X}^*)) &\leq \lambda_n(\nabla{}f(\X^*)) + \Vert{\nabla{}\tilde{f}(\tilde{\X}^*) - \nabla{}f(\X^*)}\Vert_F  \nonumber\\
&= \lambda_{n-1}(\nabla{}f(\X^*)) - \delta +  \Vert{\nabla{}\tilde{f}(\tilde{\X}^*) - \nabla{}f(\X^*)}\Vert_F \nonumber \\
&\leq \lambda_{n-1}(\nabla{}\tilde{f}(\tilde{\X}^*)) - \delta +  2\Vert{\nabla{}\tilde{f}(\tilde{\X}^*) - \nabla{}f(\X^*)}\Vert_F.
\end{align}
Using the smoothness of $f(\cdot)$ and the assumption $\sup_{\X\in\mS_n}\Vert{\nabla{}f(\X) - \nabla\tilde{f}(\X)}\Vert_F \leq \nu$, we have that
\begin{align}\label{lem:perturbGap:2}
\Vert{\nabla{}\tilde{f}(\tilde{\X}^*) - \nabla{}f(\X^*)}\Vert_F &\leq \Vert{\nabla{}\tilde{f}(\tilde{\X}^*) - \nabla{}f(\tilde{\X}^*)}\Vert_F +
\Vert{\nabla{}f(\tilde{\X}^*) - \nabla{}f(\X^*)}\Vert_F \nonumber \\
&\leq \nu + \beta\Vert{\tilde{\X}^*-\X^*}\Vert_F \leq \nu\left({1+\frac{2\beta}{\delta}}\right).
\end{align}
Plugging-in \eqref{lem:perturbGap:2} into \eqref{lem:perturbGap:1} and rearranging we obtain
\begin{align*}
\lambda_{n-1}(\nabla{}\tilde{f}(\tilde{\X}^*))  - \lambda_{n}(\nabla{}\tilde{f}(\tilde{\X}^*)) \geq \delta - 2\nu\left({1+\frac{2\beta}{\delta}}\right).
\end{align*}
Thus, Assumption \ref{ass:gap} indeed holds w.r.t. $\tilde{f}(\cdot)$ whenever $\nu < \frac{1}{2}(1+\frac{2\beta}{\delta})^{-1}\delta$.
\end{proof}

\bibliographystyle{plain}
\bibliography{bib}   

%
%
\end{document}